\documentclass{article}

    \usepackage[final]{neurips_2023}

\usepackage[utf8]{inputenc} %
\usepackage[T1]{fontenc}    %
\usepackage{hyperref}       %
\usepackage{url}            %
\usepackage{booktabs}       %
\usepackage{amsfonts}       %
\usepackage{nicefrac}       %
\usepackage{microtype}      %
\usepackage{xcolor}         %

\usepackage{mystyle}
\usepackage{footnote}
\usepackage{changepage}
\makesavenoteenv{tabular}

\title{GloptiNets: Scalable Non-Convex Optimization \\ with Certificates}

\author{%
 Gaspard Beugnot \\
 \texttt{gaspard.beugnot@inria.fr} \\
 Inria, \'Ecole normale sup\'erieure, CNRS, PSL Research University, 75005 Paris, France\\
 \AND
 Julien Mairal \\
 \texttt{julien.mairal@inria.fr} \\
 Univ. Grenoble Alpes, Inria, CNRS, Grenoble INP, LJK, 38000 Grenoble, France \\
 \And
 Alessandro Rudi \\
 \texttt{alessandro.rudi@inria.fr} \\
 Inria, \'Ecole normale sup\'erieure, CNRS, PSL Research University, 75005 Paris, France\\
}

\let\oldabstract\abstract
\let\endoldabstract\endabstract
\renewenvironment{abstract}
  {\begin{adjustwidth}{-0.4em}{0.4em}\oldabstract}
  {\endoldabstract\end{adjustwidth}}

\begin{document}

\maketitle

\begin{abstract}
  We present a novel approach to non-convex optimization with certificates, which handles smooth functions on the hypercube or on the torus. Unlike traditional methods that rely on algebraic properties, our algorithm exploits the regularity of the target function intrinsic in the decay of its Fourier spectrum. By defining a tractable family of models, we allow \emph{at the same time} to obtain precise certificates and to leverage the advanced and powerful computational techniques developed to optimize neural networks. In this way the scalability of our approach is naturally enhanced by parallel computing with GPUs. Our approach, when applied to the case of polynomials of moderate dimensions but with thousands of coefficients, outperforms the state-of-the-art optimization methods with certificates, as the ones based on Lasserre's hierarchy, addressing problems intractable for the competitors.
\end{abstract}

\section{Introduction}

Non-convex optimization is a difficult and crucial task. In this paper, we aim at optimizing globally a non-convex function defined on the hypercube, by providing a certificate of optimality on the resulting solution. Let $h$ be a smooth function on $[-1,1]^d$. Here we provide an algorithm that given $\hat{x}$, an estimate of the minimizer $x_\star$ of $h$
\begin{equation*}
  x_\star = \argmin_{x \in [-1,1]^d} h(x),
\end{equation*}
produces an $\epsilon$, that constitutes an explicit \emph{certificate} for the quality of $\widehat{x}$, of the form
\begin{equation*}
  \absv{h(x_\star) - h(\hat{x})} \leq \epsilon_\delta,
\end{equation*}
with probability $1-\delta$.
The literature abounds of algorithms to optimize non-convex functions. Typically they are either \emph{(a)} heuristics, very smart, but with no guarantees of global convergence \cite{moscato1989evolution,horst2013handbook} \emph{(b)} variation of algorithms used in convex optimization, which can guarantee convergence only to \emph{local} minima \cite{boyd2004convex} \emph{(c)} algorithms with only asymptotic guarantees of convergence to a global minimum, but no explicit certificates \cite{van1987simulated}. In general, the methods recalled above are quite fast to produce some solution, but don't provide guarantees on its quality, with the result that the produced point can be arbitrarily far from the optimum, so they are used typically where non-reliable results can be accepted.

On the contrary, there are contexts where an explicit quantification of the incurred error is crucial for the task at hand (finance, engineering, scientific validation, safety-critical scenarios \cite{lasserreMomentsPositivePolynomials2009}). In these cases, more expensive methods that provide certificates are used, such as \emph{polynomial sum-of-squares} (poly-SoS) \cite{lasserreGlobalOptimizationPolynomials2001,lasserreMomentsPositivePolynomials2009}. These kinds of techniques are quite powerful since they provide certificates in the form above, often with machine-precision error. However, \emph{(a)} they have reduced applicability since $h$ must be a multivariate polynomial (possibly sparse, low-degree) and must be known in its analytical form \emph{(b)} the resulting algorithm is a semi-definite programming optimization on matrices whose size grows very fast with the number of variables and the degree of the polynomial, becoming intractable already in moderate dimensions and degrees.

Our approach builds and extends the more recent line of works on \emph{kernel sum-of-squares}, and in particular the work of \citet{woodworthNonConvexOptimizationCertificates2022} based on the Fourier analysis. It mitigates the limitations of poly-SoS methods in both aspects: \emph{(a)} we can deal with any function $h$ (not necessarily a polynomial) for which the Fourier transform is known and \emph{(b)} the resulting algorithm leverages the smoothness properties of the objective function as \cite{woodworthNonConvexOptimizationCertificates2022} rather than relying on its algebraic structure leading to way more compact representations than poly-SoS. Contrary to \cite{woodworthNonConvexOptimizationCertificates2022}, we fully leverage the power of the certificate allowing for a drastic reduction of the computational cost of the method. Indeed, we cast the minimization in terms of a way smaller problem, similar to the optimization of a small neural network that, albeit again non-convex, produces efficiently a good candidate on which we then compute the certificate.

Notably, our focus lies on a posteriori guarantees: we define a family of models that allow for efficient computation of certificates. Once the model structure is established, we have ample flexibility in training the model, offering various possibilities to achieve good certificates in practical scenarios, while still using well-established and effective techniques in the field of deep neural networks (DNN) \cite{goodfellow2016deep} to reduce the computational burden of the approach.

Our contributions can be summarized as follows:
\begin{itemize}
  \item We propose a new approach to global optimization \emph{with certificates} which drastically extends the applicability domain allowed by the state of the art, since it can be applied to any function for which we can compute the Fourier transform  (not just polynomials).
  \item The proposed approach is naturally tailored for GPU computations and provides a refined control of time and memory requirements of the proposed algorithm, contrary to poly-SoS methods (whose complexity scales dramatically and in a rigid way with dimension and degree of the polynomial).
  \item From a technical viewpoint, we improve the results in \cite{woodworthNonConvexOptimizationCertificates2022}, by developing a fast stochastic approach to recover the certificate in high probability (\cref{thm:certificate-mom}), and we generalize the formulation of the problem to allow the use of powerful techniques from DNN, still providing a certificate on the result (\cref{sec:alg+impl}, in particular \cref{alg:certificate})
  \item In practical applications, we are able to provide certificates for functions in moderate dimensions, which surpasses the capabilities of current state-of-the-art techniques. Specifically, as shown in the experiments we can handle polynomials with thousands of coefficients. This achievement marks an important milestone towards utilizing these models to provide certificates for more challenging real-life problems.
\end{itemize}

\subsection{Previous work}

\paragraph{Polynomial SoS.}
In the field of certificate-based polynomial optimization, Lasserre's hierarchy plays a pivotal role \cite{lasserreGlobalOptimizationPolynomials2001,lasserreMomentsPositivePolynomials2009}. This hierarchy employs a sequence of SDP relaxations with increasing size proportional to $O(r^d)$ (where $d$ is the dimension of the space and  $r$ is a parameter that upper bounds the degree of the polynomial) and that ultimately converges to the optimal solution when $r \to \infty$. While Lasserre's hierarchy is primarily associated with polynomial optimization, its applicability extends beyond this domain. It offers a specific formulation for the more general moment problem, enabling a wide range of applications; see \cite{henrionMomentSOSHierarchy2020} for an introduction. For polynomial optimization problems such as in \cref{eq:min-probl-main}, a significant amount of research has been dedicated to leveraging problem structure to improve the scalability of the hierarchy. This research has predominantly focused on exploiting very specific sparsity patterns among the variables of the polynomial, enabling the handling in these restricted scenarios of instances ranging from a few variables to even thousands of variables \cite{wakiSumsSquaresSemidefinite2006,wangTSSOSMomentSOSHierarchy2021,wangChordalTSSOSMomentSOSHierarchy2021}. There have been theoretical results regarding optimization on the hypercube \cite{bachExponentialConvergenceSumofsquares2023, 99ae17951e7d4143b7d654705a2ac0c2}, but there are no algorithms handling them natively. Furthermore, alternative approaches exist that exploit different types of structure, such as the constant trace property \cite{maiExploitingConstantTrace2022}.

\paragraph{Kernel SoS.}
Kernel Sum of Squares (K-SoS) is an emerging research field that originated from the introduction of a novel parametrization for positive functions in \cite{marteau-fereyNonparametricModelsNonnegative2020}. This approach has found application in various domains, including Optimal Control \cite{berthierInfiniteDimensionalSumsofSquaresOptimal2022}, Optimal Transport \cite{muzellecNearoptimalEstimationSmooth2021} and modeling probability distribution \cite{rudiPSDRepresentationsEffective2021}. In the context of function optimization, two types of theoretical results have been explored: \emph{a priori} guarantees \cite{rudiFindingGlobalMinima2020} and \emph{a posteriori} guarantees \cite{woodworthNonConvexOptimizationCertificates2022}. A priori guarantees offer insights into the convergence rate towards a global optimum of the function, giving a rate on the number of parameters and the complexity necessary to optimize a function up to a given error. For example, \cite{rudiFindingGlobalMinima2020} proposes a general technique to achieve the global optimum, with error $\epsilon$ of a function that is $s$-times differentiable, by requiring a number of parameters essentially in the order of $O(\epsilon^{-s/d})$, allowing to avoid the curse of dimensionality in the rate, when the function is very regular, i.e., $s \geq d$, while typical black-box optimization algorithms have a complexity that scales as $\epsilon^{-d}$. A-posteriori guarantees focus on providing a certificate for the minimum found by the algorithm. In particular, \cite{woodworthNonConvexOptimizationCertificates2022}, provides both a-priori guarantee and a-posteriori certificates; however, the model considered makes it computationally infeasible to provide certificates in dimension larger than $2$.

To conclude, approaches based on kernel-SoS allow to extend the applicability of global optimization with certificates methods to a wider family of functions and on exploiting finer regularity properties beyond just the number of variables and the degrees of a polynomial. By comparison,
we focus on making the optimization amenable to high-performance GPU computation while retaining an a posteriori certificate of optimality.

\section{Computing certificates with extended k-SoS}

Without loss of generality (see next remark), with the goal of simplifying the analysis and using powerful tools from harmonic analysis, we cast the problem in terms of minimization of a \emph{periodic} function $f$ over the torus, $[0,1]^d$ (we will denote it also as $\mathbb{T}^d$). In particular, we are interested in minimizing periodic functions for which we know (or we can easily compute) the coefficients of its Fourier representation, i.e.
\begin{equation}\label{eq:min-probl-main}
  f_\star = \min_{z \in \mathbb{T}^d} f(z), \qquad f(z) = \sum_{\omega \in \ZZ^d} \hat{f}_\omega e^{2\pi \ii \omega \cdot z}, \quad \forall z \in \mathbb{T}^d,
\end{equation}
where $\ZZ$ is the set of integers.
This setting is already interesting on its own, as it encompasses a large class of smooth functions. It includes notably trigonometric polynomials, \ie functions which have only a finite number of non-zero Fourier coefficients $\hat{f}_\omega$. Optimization of trigonometric polynomials arises in multiple research areas, such as the optimal power flow \cite{vanhentenryckMachineLearningOptimal18} or quantum mechanics \cite{hillingGeometricMeasureMultipartite2010}. Note that this problem is already NP-hard, as it encompasses for instance the Max-Cut problem \cite{waldspurgerPhaseRecoveryMaxCut2013}. Even so, we will consider the more general case where we can evaluate function values of $f$, along with its Fourier coefficient $\hat{f}_\omega$, and we have access to its norm in a certain Hilbert space. This norm can be computed numerically for trigonometric polynomials, and more generally reflects the regularity (degree of differentiability) of the function, and thus the difficulty of the problem.

\begin{remark}[No loss of generality in working on the torus]
  Given a (non-periodic) function $h: [-1,1]^d \to \RR$ we can obtain a periodic function whose minimum is exactly $h_*$ and from which we can recover $x_\star$. Indeed, following the classical Chebychev construction, define $\cos(2\pi z)$ as the componentwise application of $\cos$ to the elements of $2\pi z$, i.e. $\cos(2\pi z) := (\cos(2\pi z_1), \dots, \cos(2\pi z_d))$ and
  define $f$ as $f(z) := h(\cos(2\pi z))$ for $z \in [0,1]^d$. It is immediate to see that \emph{(a)} $f$ is periodic, and, \emph{(b)} since $\cos(2\pi z)$ is invertible on $[0,1]^d$ and its image is exactly $[-1,1]^d$, we have $h_* = h(x_\star) = f(z_\star)$ where
  \begin{equation*}
    x_\star = \cos(2 \pi z_\star), \quad \textrm{and} \quad z_\star = \min_{z \in \mathbb{T}^d} f(z).
  \end{equation*}
  We discuss an efficient representation of these problems in \cref{sec:cheby-basis-impl}.
\end{remark}

\subsection{Certificates for global optimization and k-SoS}
A general ``recipe'' for obtaining a certificates was developed in \cite{woodworthNonConvexOptimizationCertificates2022}
where, in particular, it was  derived the following bound \citep[see Thm. 2]{woodworthNonConvexOptimizationCertificates2022}
\begin{equation}\label{eq:certif-with-fnorm}
  f_\star \geq \sup_{c \in \RR, \; g \in {\cal G}_+} c - \norm{f - c - g}_F ~~ ,
\end{equation}
where $\norm{u}_F$ is the $\ell_1$ norm of the Fourier coefficients of a periodic function $u$, i.e.
\begin{equation}\label{eq:def-fnorm}
  \norm{u}_F := \sum_{\omega \in \ZZ^d} \absv{\hat{u}_\omega},
\end{equation}
and the $\sup$ is taken over ${\cal G}_+$ that is a class of non-negative functions. The paper \cite{woodworthNonConvexOptimizationCertificates2022} then chooses ${\cal G}_+$ to be the set of \emph{positive semidefinite models}, leading to a possibly expensive convex SDP problem. Our approach instead starts from the following two observations: \emph{(a)} the lower bound in \cref{eq:certif-with-fnorm} holds for any set ${\cal G}_+$ of non-negative functions, \emph{not necessarily convex}, moreover \emph{(b)} any candidate solution $(g, c)$ of the supremum in \cref{eq:certif-with-fnorm} would constitute a lower bound for $f_\star$, so there is no need to solve \cref{eq:certif-with-fnorm} exactly. This yields the following theorem

\begin{theorem}\label{thm:certificate}
  Given a point $\hat{x} \in \TT^d$ and a non-negative and periodic function $g_0: \TT^d \to \RR_+$, we have
  \begin{equation}
    |f(\hat{x}) - f(x_\star)| \leq \norm{f - f(\hat{x}) - g_0}_F
  \end{equation}
\end{theorem}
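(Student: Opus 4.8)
The plan is to reduce the two-sided bound to a single one-sided estimate. Since $x_\star$ is a global minimizer, $f(\hat x)\ge f(x_\star)$, so $\absv{f(\hat x)-f(x_\star)}=f(\hat x)-f(x_\star)$ and the reverse direction $f(x_\star)-f(\hat x)\le 0\le\norm{f-f(\hat x)-g_0}_F$ is trivial. It therefore suffices to show $f(\hat x)-f(x_\star)\le\norm{f-f(\hat x)-g_0}_F$. This is exactly \cref{eq:certif-with-fnorm} evaluated at the feasible pair $c=f(\hat x)$, $g=g_0$ (a non-negative function), which reads $f_\star\ge f(\hat x)-\norm{f-f(\hat x)-g_0}_F$; the point — observation (b) in the text — is that one never needs to solve the supremum, any feasible pair already yields a certificate.

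To make the argument self-contained I would introduce the residual $u:=f-f(\hat x)-g_0$, a periodic function, and write the identity $f(z)=f(\hat x)+g_0(z)+u(z)$ for all $z\in\TT^d$. One may assume $\norm{u}_F<\infty$, otherwise the claim is vacuous; then the Fourier series of $u$ converges absolutely and uniformly, so $u$ is continuous and coincides with its Fourier series pointwise.

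Evaluating the identity at $z=x_\star$ and using $g_0(x_\star)\ge 0$ gives $f(x_\star)\ge f(\hat x)+u(x_\star)$, hence $f(\hat x)-f(x_\star)\le -u(x_\star)\le\absv{u(x_\star)}$. It remains to bound $\absv{u(x_\star)}$ by $\norm{u}_F$: applying the triangle inequality termwise to the Fourier expansion, $\absv{u(z)}=\bigl|\sum_{\omega\in\ZZ^d}\hat u_\omega e^{2\pi\ii\omega\cdot z}\bigr|\le\sum_{\omega\in\ZZ^d}\absv{\hat u_\omega}=\norm{u}_F$ for every $z\in\TT^d$, in particular at $z=x_\star$ (i.e.\ $\norm{u}_\infty\le\norm{u}_F$). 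Chaining these inequalities yields $\absv{f(\hat x)-f(x_\star)}=f(\hat x)-f(x_\star)\le\norm{u}_F=\norm{f-f(\hat x)-g_0}_F$.

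There is no real obstacle here: non-negativity of $g_0$ enters only through the single scalar inequality $g_0(x_\star)\ge 0$, and the Fourier structure enters only through the pointwise bound $\norm{u}_\infty\le\norm{u}_F$. The one thing worth stating explicitly is the (minor) regularity remark that $u$ coincides with its Fourier series, which is automatic once $\norm{u}_F<\infty$ and $f$, $g_0$ are themselves given by absolutely convergent Fourier series.
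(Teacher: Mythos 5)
Your proof is correct and follows essentially the same route as the paper's: both arguments reduce the claim to the observation that $f(\hat{x})\ge f(x_\star)$ and that the pair $(c_0,g_0)=(f(\hat{x}),g_0)$ is feasible in the supremum of \cref{eq:certif-with-fnorm}. The only difference is that you also unpack the proof of \cref{eq:certif-with-fnorm} itself (via $g_0(x_\star)\ge 0$ and the pointwise bound $\norm{u}_{L_\infty}\le\norm{u}_F$), which the paper simply cites from \citet{woodworthNonConvexOptimizationCertificates2022}; this makes the argument self-contained but does not change the approach.
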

\begin{proof}
  Since $x_\star$ is the minimizer of $f$, then $f(x_\star) \leq f(\hat{x})$. Moreover, since $c_0 = f(\hat{x})$ and $g_0$ are feasible solutions for the r.h.s. of  \cref{eq:certif-with-fnorm}, we have
  \begin{equation*}
    f(\hat{x}) \geq f(x_\star) \geq \sup_{c \in \RR, \; g \in {\cal G}_+} c - \norm{f - c - g}_F  \geq  c_0 - \norm{f - c_0 - g_0}_F,
  \end{equation*}
  from which we derive that  $0 \leq f(\hat{x}) - f(x_\star) \leq \norm{f - f(\hat{x}) - g_0}_F$.
\end{proof}

In particular, since any good candidate $g_0$ is enough to produce a certificate, we consider the following class of non-negative functions that can be seen as a \emph{two-layer neural network}.

\begin{definition}[extended K-SoS model on the torus]\label{def:ksos-torus}
  Let $K: \TT^d \times \TT^d \to \RR$ be a periodic function in the first variable and let $m, r \in \mathbb{N}$. Given a set of anchors $\mathbf{Z} = (\mathbf{z}_1, \dots, \mathbf{z_m}) \subset \TT^d$ and a matrix $R \in \RR^{r \times m}$, we define the K-SoS model $g$ with
  \begin{equation}\label{eq:def-ksos-torus}
    \forall \mathbf{x} \in \TT^d, \quad g(\mathbf{x}) = \norm{R K_\mathbf{Z}(\mathbf{x})}_2^2, \quad \textrm{and} \quad K_\mathbf{Z}(\mathbf{x}) = (K(\mathbf{x},\mathbf{z}_1), \dots, K(\mathbf{x},\mathbf{z}_m)) \in \RR^m.
  \end{equation}
\end{definition}

The functions represented by the model above are non-negative and periodic.
The model is an extension of the k-SoS model presented in \cite{marteau-fereyNonparametricModelsNonnegative2020}, where the points $(\mathbf{z}_1, \dots, \mathbf{z_m})$ cannot be optimized. Moreover it has the following benefits at the expense of the convexity in the parameters:
\begin{enumerate}
  \item The extended k-SoS models benefit of the good approximation properties of k-SoS models described in \cite{marteau-fereyNonparametricModelsNonnegative2020} and especially \cite{rudiPSDRepresentationsEffective2021}, since they are a super-set of the k-SoS, that have optimal approximation properties for non-negative functions.
  \item The extended model can have a \emph{reduced number of parameters}, by choosing a matrix $R$ with $r = 1$ or $r \ll m$. This will drastically improve the cost of the optimization, while not impacting the approximation properties of the model, since a good approximation is still possible with already $r$ proportional to $d$ \cite[see Thm. 3]{rudiFindingGlobalMinima2020}.
  \item The extended model \emph{does not require any positive semidefinite constraint} on the matrix (contrary to the base model) that is typically a well-known bottleneck to scale up the optimization in the number of parameters \cite{marteau-fereyNonparametricModelsNonnegative2020}. In the extended model we trade the positive semidefinite constraint with non-convexity. However this allows us to use all the advanced and effective  techniques we know for unconstrained (or box-constrained) non-convex optimization for (two-layers) neural networks \cite{goodfellow2016deep}.
\end{enumerate}

To conclude the picture on the k-SoS models, a critical aspect of the model is the choice of $K$, since it must guarantee good approximation properties and at the same time we need to compute easily its Fourier coefficients since we need to evaluate $\normsm{f - c - g}_F$. To this aim, a good candidate for $K$ are the \emph{reproducing kernels} defined on the torus \cite{steinwart2008support}. We use shift-invariant kernels, enabling a convenient analysis of the associated RKHS through their Fourier Transform.

\begin{definition}[Reproducing kernel on the torus]\label{def:rk-torus}
  Let $q$ be a real function on $\TT^d$, with positive Fourier Transform and $q(0) = 1$. Let $K$ be the kernel defined with
  \begin{equation}\label{eq:def-kernel-torus}
    \forall x, y \in \TT^d, ~~ K(x, y) = q(x - y) = \sum_{\omega \in \ZZ^d} \hat{q}_\omega e^{2\pi \ii \omega \cdot (x - y)}.
  \end{equation}
  Then, $K$ is a r.k bounded by $1$. We denote $\HH$ its Reproducing kernel Hilbert Space (RKHS) and by $\normsm{\cdot}_{\HH}$ the associated RKHS norm
  \begin{equation*}
    \norm{f}_\HH^2 = \sum_{\omega \in \ZZ^d} |\hat{f}_\omega|^2 / \hat{q}_\omega.
  \end{equation*}
  Define $\lambda(x) = q(x)^2$. We assume that we can \emph{compute} (and \emph{sample} from, see next section) $\hat{\lambda}_\omega$, i.e., the Fourier transform of $\lambda$, corresponding to $(\hat{q} \star \hat{q})_\omega$, for all $\omega \in \ZZ^d$.
\end{definition}

By choosing such a $K$, the models inherit the good approximation properties derived in \cite{marteau-fereyNonparametricModelsNonnegative2020,rudiPSDRepresentationsEffective2021}. We conclude by recalling that shift-invariant r.k kernel have a positive Fourier transform due to Bochner's theorem \cite{rudinBasicTheoremsFourier1990}. The fact that $K$ is bounded by $1$ can be seen with $|K(x, x)| = |q(0)| = \sum_{\omega} \hat{q}_\omega = 1$. Finally, note that the Fourier coefficients of an extended k-SoS model can be computed exactly, as in shown \eg later in \cref{lem:fourier-coeff-bessel-torus}.

\subsection{Providing certificates with the \texorpdfstring{$F$}{F}-norm}

\begin{wrapfigure}{r}{0.5\textwidth}
  \vspace{-1cm}
  \centering
  \begin{tikzpicture}[/tikz/background rectangle/.style={fill={rgb,1:red,1.0;green,1.0;blue,1.0}, fill opacity={1.0}, draw opacity={1.0}}, show background rectangle]
    \begin{axis}[point meta max={nan}, point meta min={nan}, legend cell align={left}, legend columns={1}, title={}, title style={at={{(0.5,1)}}, anchor={south}, font={{\fontsize{14 pt}{18.2 pt}\selectfont}}, color={rgb,1:red,0.0;green,0.0;blue,0.0}, draw opacity={1.0}, rotate={0.0}, align={center}}, legend style={color={rgb,1:red,0.0;green,0.0;blue,0.0}, draw opacity={1.0}, line width={1}, solid, fill={rgb,1:red,1.0;green,1.0;blue,1.0}, fill opacity={1.0}, text opacity={1.0}, font={{\fontsize{9 pt}{10.4 pt}\selectfont}}, text={rgb,1:red,0.0;green,0.0;blue,0.0}, cells={anchor={center}}, at={(0.98, 0.98)}, anchor={north east}}, axis background/.style={fill={rgb,1:red,1.0;green,1.0;blue,1.0}, opacity={1.0}}, anchor={north west}, xshift={1.0mm}, yshift={-1.0mm},
            width={1.1\linewidth},
            height={48.8mm}, scaled x ticks={false},
            xlabel={$N$},
            x tick style={color={rgb,1:red,0.0;green,0.0;blue,0.0}, opacity={1.0}},
            x tick label style={color={rgb,1:red,0.0;green,0.0;blue,0.0}, opacity={1.0}, rotate={0}},
            xlabel style={
                    at = {(current axis.south east)},
                    anchor = east, above = 0mm, right=0mm,
                    font={{\fontsize{8 pt}{10.4 pt}\selectfont}},
                    draw opacity={1.0}, rotate={0.0}
                },
            xmode={log}, log basis x={10}, xmajorgrids={true}, xmin={320}, xmax={160000}, xticklabels={{$10^{3}$,$10^{4}$,$10^{5}$}}, xtick={{1000.0,10000.0,100000.0}}, xtick align={inside}, xticklabel style={font={{\fontsize{8 pt}{10.4 pt}\selectfont}}, color={rgb,1:red,0.0;green,0.0;blue,0.0}, draw opacity={1.0}, rotate={0.0}}, x grid style={color={rgb,1:red,0.0;green,0.0;blue,0.0}, draw opacity={0.1}, line width={0.5}, solid}, axis x line*={left}, x axis line style={color={rgb,1:red,0.0;green,0.0;blue,0.0}, draw opacity={1.0}, line width={1}, solid}, scaled y ticks={false}, ylabel={}, y tick style={color={rgb,1:red,0.0;green,0.0;blue,0.0}, opacity={1.0}}, y tick label style={color={rgb,1:red,0.0;green,0.0;blue,0.0}, opacity={1.0}, rotate={0}}, ylabel style={at={(ticklabel cs:0.5)}, anchor=near ticklabel, at={{(ticklabel cs:0.5)}}, anchor={near ticklabel}, font={{\fontsize{11 pt}{14.3 pt}\selectfont}}, color={rgb,1:red,0.0;green,0.0;blue,0.0}, draw opacity={1.0}, rotate={0.0}}, ymajorgrids={true}, ymin={0}, ymax={0.2246096730232239}, yticklabels={{$0.00$,$0.05$,$0.10$,$0.15$,$0.20$}}, ytick={{0.0,0.05000000000000001,0.10000000000000002,0.15000000000000002,0.20000000000000004}}, ytick align={inside}, yticklabel style={font={{\fontsize{8 pt}{10.4 pt}\selectfont}}, color={rgb,1:red,0.0;green,0.0;blue,0.0}, draw opacity={1.0}, rotate={0.0}}, y grid style={color={rgb,1:red,0.0;green,0.0;blue,0.0}, draw opacity={0.1}, line width={0.5}, solid}, axis y line*={left}, y axis line style={color={rgb,1:red,0.0;green,0.0;blue,0.0}, draw opacity={1.0}, line width={1}, solid}, colorbar={false}]
        \addplot[color={rgb,1:red,0.0;green,0.6056;blue,0.9787}, name path={8c3bbcd2-2309-4836-affd-2ff4e0d19605}, draw opacity={1.0}, line width={1}, solid]
        table[row sep={\\}]
            {
                \\
                0.6399999999999992  0.024282343685626984  \\
                8.000000000000003e7  0.024282343685626984  \\
            }
        ;
        \addlegendentry {$L_\infty$ norm}
        \addplot[color={rgb,1:red,0.8889;green,0.4356;blue,0.2781}, name path={ae748609-5bef-412a-8804-deeac3556952}, draw opacity={1.0}, line width={1}, solid]
        table[row sep={\\}]
            {
                \\
                0.6399999999999992  0.06433214247226715  \\
                8.000000000000003e7  0.06433214247226715  \\
            }
        ;
        \addlegendentry {$F$-norm}
        \addplot+[line width={0}, draw opacity={0}, fill={rgb,1:red,0.2422;green,0.6433;blue,0.3044}, fill opacity={0.5}, mark={none}, forget plot]
        coordinates {
                (320.0,0.19232838973402977)
                (448.0,0.17502900984416392)
                (608.0,0.15903969035528137)
                (864.0,0.14311471308332)
                (1184.0,0.1322214537468352)
                (1632.0,0.12305250548596927)
                (2272.0,0.11264899808668175)
                (3168.0,0.10560830391935909)
                (4384.0,0.1007582704552474)
                (6080.0,0.09500200480338852)
                (8416.0,0.08998648982243058)
                (11680.0,0.08554672728207063)
                (16224.0,0.08279624764749939)
                (22496.0,0.08010121956575988)
                (31168.0,0.07735250889443732)
                (43232.0,0.075265511718984)
                (59968.0,0.07381961854858916)
                (83168.0,0.07229454541775114)
                (115360.0,0.07122981419647952)
                (160000.0,0.07013803558158993)
                (160000.0,0.06944793598461269)
                (115360.0,0.07074014713848847)
                (83168.0,0.0719645606932295)
                (59968.0,0.07320050584836524)
                (43232.0,0.0743526643490747)
                (31168.0,0.07670366986774776)
                (22496.0,0.07912865802754043)
                (16224.0,0.08130274300405914)
                (11680.0,0.0838462112369771)
                (8416.0,0.08805408712224479)
                (6080.0,0.09173179671284476)
                (4384.0,0.09774395628059532)
                (3168.0,0.10062353916458691)
                (2272.0,0.10786610346042194)
                (1632.0,0.11904541113253184)
                (1184.0,0.12573880899516726)
                (864.0,0.13835422272544418)
                (608.0,0.15162185356996488)
                (448.0,0.16669812973866846)
                (320.0,0.1835479587316513)
                (320.0,0.19232838973402977)
            }
        ;
        \addplot+[line width={0}, draw opacity={0}, fill={rgb,1:red,0.2422;green,0.6433;blue,0.3044}, fill opacity={0.5}, mark={none}, forget plot]
        coordinates {
                (320.0,0.19232838973402977)
                (448.0,0.17502900984416392)
                (608.0,0.15903969035528137)
                (864.0,0.14311471308332)
                (1184.0,0.1322214537468352)
                (1632.0,0.12305250548596927)
                (2272.0,0.11264899808668175)
                (3168.0,0.10560830391935909)
                (4384.0,0.1007582704552474)
                (6080.0,0.09500200480338852)
                (8416.0,0.08998648982243058)
                (11680.0,0.08554672728207063)
                (16224.0,0.08279624764749939)
                (22496.0,0.08010121956575988)
                (31168.0,0.07735250889443732)
                (43232.0,0.075265511718984)
                (59968.0,0.07381961854858916)
                (83168.0,0.07229454541775114)
                (115360.0,0.07122981419647952)
                (160000.0,0.07013803558158993)
                (160000.0,0.07085524635124324)
                (115360.0,0.07160157581652421)
                (83168.0,0.07278790200325376)
                (59968.0,0.07479457991881888)
                (43232.0,0.0759240588521913)
                (31168.0,0.07771248593115185)
                (22496.0,0.08096956238258957)
                (16224.0,0.083748828218674)
                (11680.0,0.08669603596355867)
                (8416.0,0.09185200568036551)
                (6080.0,0.0983571659040145)
                (4384.0,0.10231455965126182)
                (3168.0,0.10768535219125355)
                (2272.0,0.11654002468787708)
                (1632.0,0.12929638185377665)
                (1184.0,0.13698254350392008)
                (864.0,0.15009237031202827)
                (608.0,0.16590484754226636)
                (448.0,0.18594351958761599)
                (320.0,0.20419061183929443)
                (320.0,0.19232838973402977)
            }
        ;
        \addplot[color={rgb,1:red,0.2422;green,0.6433;blue,0.3044}, name path={0aaf82df-2a51-4b3d-9da7-69487ed8a561}, legend image code/.code={{
                            \draw[fill={rgb,1:red,0.2422;green,0.6433;blue,0.3044}, fill opacity={0.5}] (0cm,-0.1cm) rectangle (0.6cm,0.1cm);
                        }}, draw opacity={1.0}, line width={1}, solid]
        table[row sep={\\}]
            {
                \\
                320.0  0.19232838973402977  \\
                448.0  0.17502900984416392  \\
                608.0  0.15903969035528137  \\
                864.0  0.14311471308332  \\
                1184.0  0.1322214537468352  \\
                1632.0  0.12305250548596927  \\
                2272.0  0.11264899808668175  \\
                3168.0  0.10560830391935909  \\
                4384.0  0.1007582704552474  \\
                6080.0  0.09500200480338852  \\
                8416.0  0.08998648982243058  \\
                11680.0  0.08554672728207063  \\
                16224.0  0.08279624764749939  \\
                22496.0  0.08010121956575988  \\
                31168.0  0.07735250889443732  \\
                43232.0  0.075265511718984  \\
                59968.0  0.07381961854858916  \\
                83168.0  0.07229454541775114  \\
                115360.0  0.07122981419647952  \\
                160000.0  0.07013803558158993  \\
            }
        ;
        \addlegendentry {Our bound (Thm. 3)}
    \end{axis}
\end{tikzpicture}
  \vspace{-.7cm}
  \caption{$f - f_\star$ is a trigonometric polynomial approximated by an extended k-SoS model $g$. The $L_\infty$ norm of the difference \emph{(blue)} is upper-bounded by the $F$-norm \emph{(red)}, which is itself upper bounded by the MoM inequality in \cref{thm:certificate-mom}, with probability $98\%$, here showed with respect to the number $N$ of sampled frequencies. Shaded area shows min/max values across $10$ estimations.}
  \vspace{-.7cm}
  \label{fig:fourier_fnorm}
\end{wrapfigure}
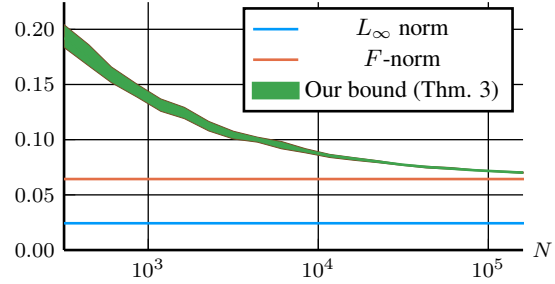

As discussed in the previous section our approach for providing a certificate on $f$ relies on first obtaining $\widehat{x}$ using a fast algorithm without guarantees and solving approximately \cref{eq:certif-with-fnorm} to obtain the certificate (see \cref{thm:certificate}). With this aim, now we need an efficient way to compute the norm $\|\cdot\|_F$. We use here a stochastic approach. Introducing a probability $\hat{\lambda}_\omega$ (that later will be chosen as a rescaled version of $\hat{\lambda}_\omega$ in \cref{def:rk-torus}) on $\ZZ^d$ we rewrite the $F$-norm
\begin{equation}\label{eq:fnorm-expectation}
  \norm{u}_F = \sum_{\omega \in \ZZ^d} \hat{\lambda}_\omega \cdot \frac{\absv{\hat{u}_\omega}}{\hat{\lambda}_\omega} = \EE_{\omega \sim \hat{\lambda}_\omega} \csb{\frac{\absv{\hat{u}_\omega}}{\hat{\lambda}_\omega}}
\end{equation}
which yields an objective that is amenable to stochastic optimization. From there, \cite{woodworthNonConvexOptimizationCertificates2022} computes a certificate by truncating the sum to a hypercube $\{ \omega; \norm{\omega}_{\infty} \leq N \}$ of size $N^d$ and bounding the remaining terms with a smoothness assumption on $u = f - c - g$, which enables to control the decay of $\hat{u}_\omega$. We want to avoid this cost exponential in the dimension so we proceed differently.

\paragraph{Probabilistic estimates with the $\HH$ norm.}
Given that the $F$-norm can be written as an expectation in \cref{eq:fnorm-expectation}, we approximate it with an empirical mean $\hat{S}$ given with $N$ i.i.d samples $(\omega_i)_{1 \leq i \leq n} \sim \hat{\lambda}_\omega$. Now, note that the variance of $\hat{S}$ can be upper bounded by a Hilbert norm, as
\begin{equation}\label{eq:fnorm-variance}
  \hat{S} = \frac{1}{N} \sum_{i=1}^N \frac{\absv{\hat{u}_{\omega_i}}}{\lambda_{\omega_i}}, ~~ \text{so that} ~~ \Var \hat{S} \leq \frac{1}{N} \EE \p{\frac{\absv{\hat{u}_\omega}}{\hat{\lambda}_\omega}}^2 = \frac{1}{N} \sum_{\omega \in \ZZ^d} \frac{\absv{\hat{u}_\omega}^2}{\hat{\lambda}_\omega} = \frac{1}{N} \norm{u}_{\HH_\lambda}^2,
\end{equation}
with $\HH_\lambda$ the RKHS from \cref{def:rk-torus} with kernel $K(x,x') = \sum_{\omega \in \ZZ^d} \hat{\lambda}_\omega e^{2\pi \ii \omega \cdot (x-x')}$. This allows to quantify the deviation of $\hat{S}$ from $\EE[\hat{S}] = \norm{u}_F$, with \eg Chebychev's inequality, as shown in next theorem.

\begin{theorem}[Certificate with Chebychev Inequality]\label{thm:certificate-cheby}
  Let $(\hat{\lambda}_\omega)_{\omega}$ be a probability distribution on $\ZZ^d$, $\delta \in (0,1)$ and $g$ a positive function. Let $N > 0$ and $\hat{S}$ be the empirical mean of $|\hat{f}_\omega - c - \hat{g}_\omega|/\hat{\lambda}_\omega$ obtained with $N$ samples $\omega_i \sim \hat{\lambda}_\omega$. Then, a certificate with probability $1-\delta$ is given with
  \begin{equation}\label{eq:certificate-cheby}
    f_\star \geq c - \hat{S} - \frac{\norm{f - c - g}_{\HH_\lambda}}{\sqrt{N\delta}}, ~~ \hat{S} = \frac{1}{N} \sum_{i=1}^N \frac{\absv{\hat{f}_{\omega_i} - c \one_{\omega_i = 0} - \hat{g}_{\omega_i}}}{\hat{\lambda}_{\omega_i}}.
  \end{equation}
\end{theorem}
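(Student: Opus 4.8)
The plan is to combine the deterministic lower bound behind \cref{thm:certificate} with a one-sided concentration bound for the empirical mean $\hat S$, using the variance estimate of \cref{eq:fnorm-variance}. Write $u := f - c - g$, so that $\hat u_\omega = \hat f_\omega - c\,\one_{\omega = 0} - \hat g_\omega$ (the constant $c$ only perturbs the zeroth Fourier coefficient) and the $i$-th summand defining $\hat S$ in \cref{eq:certificate-cheby} is exactly $X_i := \absv{\hat u_{\omega_i}}/\hat\lambda_{\omega_i}$, i.e. $\hat S = \frac1N\sum_{i=1}^N X_i$ with $X_1,\dots,X_N$ i.i.d. By \cref{eq:fnorm-expectation}, $\EE[\hat S] = \EE_{\omega\sim\hat\lambda_\omega}\csb{\absv{\hat u_\omega}/\hat\lambda_\omega} = \norm{u}_F$; and by the computation in \cref{eq:fnorm-variance}, $\Var(\hat S) = \frac1N\Var(X_1) \le \frac1N\EE[X_1^2] = \frac1N\norm{u}_{\HH_\lambda}^2$. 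Both rewritings are well posed since $\hat\lambda_\omega > 0$ for every $\omega\in\ZZ^d$ (positivity of the kernel's Fourier transform via Bochner, cf. \cref{def:rk-torus}), and the statement is vacuous when $u\notin\HH_\lambda$.

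Next I would apply Chebyshev's inequality in one-sided form: for every $t>0$,
\[
  \Pr\p{\norm{u}_F - \hat S > t} \;\le\; \Pr\p{\absv{\hat S - \norm{u}_F} > t} \;\le\; \frac{\Var(\hat S)}{t^2} \;\le\; \frac{\norm{u}_{\HH_\lambda}^2}{N t^2}.
\]
Choosing $t = \norm{u}_{\HH_\lambda}/\sqrt{N\delta}$ makes the rightmost quantity equal to $\delta$, hence with probability at least $1-\delta$ we have $\norm{u}_F \le \hat S + \norm{u}_{\HH_\lambda}/\sqrt{N\delta}$.

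Finally I would close the loop with the deterministic step. Since $g$ is non-negative and $c\in\RR$, the pair $(g,c)$ is feasible for the supremum in \cref{eq:certif-with-fnorm} (which holds over any class of non-negative functions containing $g$), exactly as in the proof of \cref{thm:certificate} but with $c$ in place of $f(\hat x)$; therefore $f_\star \ge c - \norm{f - c - g}_F = c - \norm{u}_F$ holds deterministically. Substituting the high-probability upper bound on $\norm{u}_F$ from the previous paragraph gives $f_\star \ge c - \hat S - \norm{f - c - g}_{\HH_\lambda}/\sqrt{N\delta}$ with probability $1-\delta$, which is precisely \cref{eq:certificate-cheby}.

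I do not expect a genuine obstacle here: the two substantive ingredients — the expectation identity and the variance-to-$\HH_\lambda$-norm bound — are already established in \cref{eq:fnorm-expectation,eq:fnorm-variance}, so what remains is a textbook application of Chebyshev together with the feasibility observation of \cref{thm:certificate}. The only points worth a line of care are that one should use the \emph{one-sided} rather than two-sided Chebyshev bound so as not to carry an extraneous factor in the deviation term, and that $\hat\lambda_\omega$ must be strictly positive on the support of $\hat u$ for the rewriting of $\norm{u}_F$ as an expectation to be valid — which is guaranteed by the construction of $q$ in \cref{def:rk-torus}.
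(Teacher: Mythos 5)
Your proposal is correct and follows essentially the same route as the paper's proof: establish that $\hat{S}$ is an unbiased estimator of $\norm{f-c-g}_F$, bound its variance by $\norm{f-c-g}_{\HH_\lambda}^2/N$ via \cref{eq:fnorm-variance}, apply Chebychev's inequality, and plug the resulting high-probability upper bound on the $F$-norm into the deterministic lower bound \cref{eq:certif-with-fnorm}. Your closing remark about needing a ``one-sided'' Chebychev bound is a non-issue (and the inequality chain you actually write is just the standard two-sided bound applied to a one-sided event, exactly as in the paper), but it does not affect correctness.
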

\begin{proof}
  From its definition in \cref{eq:fnorm-expectation}, we see that an unbiased estimator of the $F$-norm is given by $\hat{S}$.
  Then, Chebychev's inequality states that $|\hat{S} - \norm{u}_F|^2 \leq \Var \hat{S}/\delta$ with probability at least $1-\delta$.
  Using the computation of the variance in \cref{eq:fnorm-variance}, it follows that $\norm{u}_F \leq \hat{S} + \norm{f - c - g}_{\HH_\lambda}/\sqrt{N\delta}$ with probability at least $1 - \delta$. Plugging this expression into \cref{eq:certif-with-fnorm}, we obtain the result.
\end{proof}

Note that the norm in $\HH_\lambda$ can be developed with (assuming for conciseness that $(-c)$ is comprised in the $0$-frequency of $f$)
\begin{equation}
  \norm{u}_{\HH_\lambda}^2 = \sum_{\omega \in \ZZ^d} \frac{\hat{f}_\omega^* (\hat{f}_\omega - 2\hat{g}_\omega)}{\hat{\lambda}_\omega} + \norm{g}_{\HH_\lambda}^2 \leq (\norm{f}_{\HH_\lambda} + \norm{g}_{\HH_\lambda})^2.
\end{equation}
Thus, \cref{thm:certificate-cheby} provides a certificate of $f_\star$ as long as we can \emph{(i)} evaluate the Fourier transform $\hat{g}_\omega$ of $g$ and \emph{(ii)} compute its Hilbert norm in some r.k $\HH_\lambda$ induced by $\hat{\lambda}_\omega$. In next section, we detail the choice we make to achieve this efficiently, with kernels amenable to GPU computation, scaling to thousands of coefficients.

\begin{remark}[Using a RKHS norm instead of the $F$-norm]
  Note that since $(\hat{\lambda}_\omega)_\omega$ sums to $1$, the associated kernel is bounded by $1$. Hence $\norm{u}_{L_\infty} \leq \norm{u}_{\HH_\lambda}$, and the latter could be used instead of the $F$-norm in \cref{eq:certif-with-fnorm}. There are two reasons for taking our approach instead. Firstly, the $F$-norm is always tighter that a RKHS norm (see \eg \cite[Lem. 4]{woodworthNonConvexOptimizationCertificates2022}); secondly, we cannot compute $\norm{u}_{\HH_\lambda}$ efficiently and have to rely instead on another upper bound. However, taking the number of samples $N = O(\norm{u}_{\HH_\lambda}^2)$ alleviates this issue.
\end{remark}

\paragraph{Exponential concentration bounds with MoM.}
The scaling in $1/\sqrt{\delta}$ in \cref{thm:certificate-cheby} can be prohibitive if one requires a high probability on the result ($\delta \ll 1$). Hopefully, alternative estimator exist for those cases. The Median-of-Mean estimator is an example, illustrated in \cref{thm:certificate-mom}.

\begin{theorem}[Certificate with MoM estimator]\label{thm:certificate-mom}
  Let $(\hat{\lambda}_\omega)_{\omega}$ be a probability distribution on $\ZZ^d$, and $\delta \in (0,1)$. Draw $N > 0$ frequencies $\omega_i \sim \hat{\lambda}_\omega$. Define the MoM estimator with the following: for $K \in \NN$ s.t. $\delta = e^{-K/8}$ and $N = Kb$, $b \geq 1$, write $B_1, \dots, B_K$ a partition of $[N]$; then
  \begin{equation}
    \MoM_\delta(|\hat{u}_{\omega_i}| / \lambda_{\omega_i}) = \mathrm{median} \cb{\frac{1}{b} \sum_{i \in B_j} \frac{|\hat{f}_{\omega_i} - c \one_{\omega_i = 0} - \hat{g}_{\omega_i}|}{\lambda_{\omega_i}}}_{1 \leq j \leq K}.
  \end{equation}
  A certificate on $f_\star$ with probability $1-\delta$ follows, with
  \begin{equation}
    f_\star \geq c - \MoM_\delta(|\hat{u}_{\omega_i}| / \lambda_{\omega_i}) - 4\sqrt{2} \norm{f - c - g}_{\HH_\lambda} \sqrt{\frac{\log(1/\delta)}{N}}.
  \end{equation}
\end{theorem}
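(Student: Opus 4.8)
\emph{Proof proposal.} The plan is to mirror the proof of \cref{thm:certificate-cheby}, replacing Chebychev's inequality applied to the full empirical mean by the classical deviation bound for the median-of-means estimator. Write $u = f - c - g$, so that the quantity averaged inside each block, $|\hat{f}_{\omega_i} - c\one_{\omega_i=0} - \hat{g}_{\omega_i}|/\hat{\lambda}_{\omega_i}$, is exactly $|\hat{u}_{\omega_i}|/\hat{\lambda}_{\omega_i}$: by \cref{eq:fnorm-expectation} this is an unbiased sample of $\norm{u}_F$, and by the variance computation in \cref{eq:fnorm-variance} its per-sample variance satisfies $\Var(|\hat{u}_\omega|/\hat{\lambda}_\omega) \leq \EE(|\hat{u}_\omega|/\hat{\lambda}_\omega)^2 = \norm{u}_{\HH_\lambda}^2 =: \sigma^2$.

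First I would control a single block mean. For each $j \in \{1,\dots,K\}$, $\bar{X}_j := \frac{1}{b}\sum_{i\in B_j} |\hat{u}_{\omega_i}|/\hat{\lambda}_{\omega_i}$ is an average of $b$ i.i.d.\ copies with mean $\norm{u}_F$ and variance at most $\sigma^2/b$; Chebychev's inequality with threshold $t := 2\sigma/\sqrt{b}$ then gives $\Pr(|\bar{X}_j - \norm{u}_F| > t) \leq \sigma^2/(b t^2) = 1/4$. Next I would pass to the median: the event $\{|\MoM_\delta - \norm{u}_F| > t\}$ forces at least $K/2$ of the independent blocks to satisfy $|\bar{X}_j - \norm{u}_F| > t$, hence its probability is at most $\Pr(\mathrm{Bin}(K,1/4) \geq K/2)$, which by Hoeffding's inequality is bounded by $\exp(-2K(1/2-1/4)^2) = e^{-K/8} = \delta$. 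Therefore, with probability at least $1-\delta$, $\norm{u}_F \leq \MoM_\delta(|\hat{u}_{\omega_i}|/\hat{\lambda}_{\omega_i}) + 2\sigma/\sqrt{b}$.

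Finally I would substitute the parameters. Since $b = N/K$ and $K = 8\log(1/\delta)$, one gets $2\sigma/\sqrt{b} = 2\sigma\sqrt{K/N} = 4\sqrt{2}\,\sigma\sqrt{\log(1/\delta)/N}$, and $\sigma \leq \norm{f - c - g}_{\HH_\lambda}$. Plugging the resulting high-probability bound $\norm{u}_F \leq \MoM_\delta + 4\sqrt{2}\norm{f-c-g}_{\HH_\lambda}\sqrt{\log(1/\delta)/N}$ into the lower bound $f_\star \geq c - \norm{u}_F$ from \cref{eq:certif-with-fnorm} (equivalently \cref{thm:certificate}) yields the stated certificate.

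The main obstacle is the constant bookkeeping: the choice $t = 2\sigma/\sqrt{b}$ is precisely what makes the per-block failure probability equal $1/4$, and $1/4$ is precisely what makes Hoeffding return the clean $e^{-K/8}$; any other split would change the final $4\sqrt{2}$. I would also need to check that the integrality constraints in the statement ($K\in\NN$, $b\geq 1$, $N=Kb$) are compatible with the relation $\delta = e^{-K/8}$, and to be careful to use the \emph{per-sample} variance bound $\Var(|\hat{u}_\omega|/\hat{\lambda}_\omega) \leq \norm{u}_{\HH_\lambda}^2$ rather than the variance $\frac{1}{N}\norm{u}_{\HH_\lambda}^2$ of the full empirical mean $\hat{S}$ appearing in \cref{eq:fnorm-variance}.
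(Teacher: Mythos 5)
Your proof is correct and follows essentially the same route as the paper's: the paper simply invokes the median-of-means deviation bound of \citet[Thm. 4.1]{devroyeSubGaussianMeanEstimators2016}, combines it with the variance bound from \cref{eq:fnorm-variance}, and plugs the result into \cref{eq:certif-with-fnorm}, exactly as you do. The only difference is that you re-derive the cited concentration inequality inline (per-block Chebychev at threshold $2\sigma/\sqrt{b}$ giving failure probability $1/4$, then Hoeffding on the binomial count of failing blocks), correctly recovering the constant $4\sqrt{2}$ and the calibration $\delta = e^{-K/8}$.
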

\begin{proof}
  Using \eg Theorem 4.1 from \cite{devroyeSubGaussianMeanEstimators2016} we get that the deviation of the MoM estimator from the expectation is bounded by
  \begin{equation}\label{eq:proof-mom-1}
    \absv{
      \norm{u}_F - \MoM_\delta(|\hat{u}_{\omega_i}| / \lambda_{\omega_i})
    } \leq 4\sqrt{2} \sqrt{\Var(|\hat{u}_\omega|/\hat{\lambda}_\omega) \frac{\log(1/\delta)}{N}}
    ~~ \text{with proba. $1 - \delta$.}
  \end{equation}
  Using the upper bound on the variance with the $\HH_\lambda$ norm given in \cref{eq:fnorm-variance} and plugging the resulting expression into \cref{eq:certif-with-fnorm}, we obtain the result.
\end{proof}

To conclude this section, bounding the $L_\infty$ norm from above with the $F$-norm in \cref{eq:def-fnorm} enables to obtain a certificate on $f$, as shown in \cref{thm:certificate}. The $F$-norm requires an infinite number of computation in the general case, but can be bounded efficiently with a probabilistic estimate, given by \cref{thm:certificate-cheby} or \cref{thm:certificate-mom}. This is summed up in \cref{fig:fourier_fnorm}. Note that the difference $\norm{\cdot}_{F} - \norm{\cdot}_{L_\infty}$ is a source of conservatism in the certificate which we do not quantify -- yet, the $F$-norm is optimal for a class of norms, see \cite[Lemma 3]{woodworthNonConvexOptimizationCertificates2022}.

\section{Algorithm and implementation}
\label{sec:alg+impl}

\subsection{Bessel kernel}

We now detail the specific choice of kernel we make in order to compute the certificate of \cref{thm:certificate-cheby} or \cref{thm:certificate-mom} efficiently. Our first observation is to use a kernel stable by product, so that we can easily characterize a Hilbert space the model $g$ belongs to. This restricts the choice to the exponential family. That's why we define, for a parameter $s > 0$,
\begin{equation}\label{eq:def-bessel-kernel}
  \forall x \in \TT, ~~ q_s(x) = e^{s (\cos (2\pi x) - 1)} = \sum_{\omega \in \ZZ} e^{-s} I_{|\omega|}(s) e^{2\pi \ii \omega x},
\end{equation}
with $I_{|\omega|}(\cdot)$ the modified Bessel function of the first kind \cite[p.181]{watsonTreatiseTheoryBessel1922}. Then, define $K_s(x, y) = q_s(x - y)$ as in \cref{def:rk-torus}, and take a tensor product to extend the definition of $K$ to multiple dimension, \ie ~ $K_{\mathbf{s}}(\mathbf{x}, \mathbf{y}) = \prod_{\ell = 1}^d K_{\mathbf{s}_\ell}(\mathbf{x}_\ell, \mathbf{y}_\ell)$ for any $\mathbf{x}, \mathbf{y} \in \TT^d$. We refer to this kernel as the \emph{Bessel kernel}, and the associated RKHS as $\HH_\mathbf{s}$. It is stable by product as ~ $K_\mathbf{s}(x, y) = K_{\mathbf{s}/2}(x, y)^2$. This is key to compute the Fourier transform of the model $g$, and in contrast to previous approaches which used the exponential kernel with $\hat{q}_\omega \propto e^{-s |\omega|}$ \cite{woodworthNonConvexOptimizationCertificates2022,bachExponentialConvergenceSumofsquares2023}.

In the following, $g$ is a K-SoS model defined as in \cref{def:ksos-torus}, with the Bessel kernel of parameter $\mathbf{s} \in \RR_+^d$ defined in \cref{eq:def-bessel-kernel}.

\begin{lemma}[Fourier coefficient of the Bessel kernel]\label{lem:fourier-coeff-bessel-torus}
  For $\omega \in \ZZ^d$, the Fourier coefficient of $g$ in $\omega$ can be computed in $O(d r m^2)$ time with
  \begin{equation}
    \hat{g}_\omega = \sum_{i, j = 1}^m R_i^\top R_j \prod_{\ell = 1}^{d} e^{-2 \mathbf{s}_\ell} I_{|\omega_\ell|}(2s \cos \pi (\mathbf{z}_{i \ell} - \mathbf{z}_{j \ell})) e^{-\ii \pi \omega_\ell (\mathbf{z}_{i \ell} + \mathbf{z}_{j \ell})}.
  \end{equation}
\end{lemma}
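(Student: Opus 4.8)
The plan is to expand the squared norm defining $g$, use the tensor-product structure of the Bessel kernel to reduce everything to one-dimensional computations, and then compute the one-dimensional Fourier coefficient of a product of two Bessel factors via the classical product formula for $e^{s\cos\theta}$. Starting from \cref{eq:def-ksos-torus}, write $g(\mathbf{x}) = \|R K_{\mathbf{Z}}(\mathbf{x})\|_2^2 = \sum_{i,j=1}^m (R^\top R)_{ij}\, K_{\mathbf{s}}(\mathbf{x},\mathbf{z}_i)\,\overline{K_{\mathbf{s}}(\mathbf{x},\mathbf{z}_j)}$, where I identify $(R^\top R)_{ij} = R_i^\top R_j$ with $R_i$ the $i$-th column of $R$. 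Since $K_{\mathbf{s}}$ is real-valued and shift-invariant, $K_{\mathbf{s}}(\mathbf{x},\mathbf{z}_i)K_{\mathbf{s}}(\mathbf{x},\mathbf{z}_j) = \prod_{\ell=1}^d q_{\mathbf{s}_\ell}(\mathbf{x}_\ell - \mathbf{z}_{i\ell})\,q_{\mathbf{s}_\ell}(\mathbf{x}_\ell - \mathbf{z}_{j\ell})$, so the Fourier coefficient $\hat g_\omega$ factorizes as $\hat g_\omega = \sum_{i,j=1}^m R_i^\top R_j \prod_{\ell=1}^d c^{(\ell)}_{ij}(\omega_\ell)$, where $c^{(\ell)}_{ij}(n)$ is the $n$-th Fourier coefficient of the one-variable function $t \mapsto q_{\mathbf{s}_\ell}(t - \mathbf{z}_{i\ell})\,q_{\mathbf{s}_\ell}(t - \mathbf{z}_{j\ell})$.

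The core computation is therefore the one-dimensional Fourier coefficient of $q_s(t-a)q_s(t-b)$. Using $q_s(x) = e^{s(\cos 2\pi x - 1)}$ from \cref{eq:def-bessel-kernel}, the product equals $e^{-2s}\exp\!\big(s\cos 2\pi(t-a) + s\cos 2\pi(t-b)\big)$. The key trigonometric step is the sum-to-product identity $\cos\alpha + \cos\beta = 2\cos\tfrac{\alpha-\beta}{2}\cos\tfrac{\alpha+\beta}{2}$, which turns the exponent into $2s\cos\pi(a-b)\,\cos\big(2\pi t - \pi(a+b)\big)$. Hence $q_s(t-a)q_s(t-b) = e^{-2s}\exp\!\big(2s\cos\pi(a-b)\cdot\cos(2\pi t - \pi(a+b))\big)$, which is exactly a Bessel-kernel-type function in the shifted variable $t - \tfrac{a+b}{2}$ with modified parameter $\tilde s = 2s\cos\pi(a-b)$. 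Applying the generating-function identity $e^{\tilde s\cos 2\pi u} = \sum_{n\in\ZZ} I_{|n|}(\tilde s)\,e^{2\pi\ii n u}$ (the same identity underlying \cref{eq:def-bessel-kernel}, without the $e^{-\tilde s}$ normalization) and tracking the shift $u = t - \tfrac{a+b}{2}$, the $n$-th Fourier coefficient picks up the phase factor $e^{-\ii\pi n(a+b)}$, giving $c_{ij}(n) = e^{-2s}\,I_{|n|}\big(2s\cos\pi(a-b)\big)\,e^{-\ii\pi n(a+b)}$ with $a = \mathbf{z}_{i\ell}$, $b = \mathbf{z}_{j\ell}$. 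Substituting back into the factorized expression yields precisely the claimed formula; note $s$ in the statement should read $\mathbf{s}_\ell$ inside the $\ell$-th factor.

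For the complexity claim: computing one scalar $I_{|n|}(\cdot)$ and the associated phase is $O(1)$, so each factor $c^{(\ell)}_{ij}(\omega_\ell)$ costs $O(1)$, each product over $\ell$ costs $O(d)$, the double sum over $i,j$ has $m^2$ terms, and the inner product $R_i^\top R_j$ is over $r$ coordinates — but precomputing the Gram matrix $R^\top R$ once in $O(r m^2)$ and reusing it gives total cost $O(d m^2 + r m^2) = O(dr m^2)$ as stated (or more precisely $O((d+r)m^2)$, dominated by $O(drm^2)$). The main obstacle is purely bookkeeping rather than conceptual: one must be careful with the factors of $2\pi$ versus $\pi$ in the angle arguments (the half-angle in the sum-to-product identity is what produces $\cos\pi(\mathbf{z}_{i\ell}-\mathbf{z}_{j\ell})$ rather than $\cos 2\pi(\cdot)$, and the $e^{-\ii\pi\omega_\ell(\mathbf{z}_{i\ell}+\mathbf{z}_{j\ell})}$ phase likewise), and with the normalization constants ($e^{-2s}$ accumulating as $e^{-2\mathbf{s}_\ell}$ per dimension, matching $\prod_\ell e^{-2\mathbf{s}_\ell}$ in the statement). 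A small sanity check at $\omega = 0$ recovering $\hat g_0 = \sum_{i,j} R_i^\top R_j \prod_\ell e^{-2\mathbf{s}_\ell} I_0(2\mathbf{s}_\ell\cos\pi(\mathbf{z}_{i\ell}-\mathbf{z}_{j\ell})) = \int_{\TT^d} g$ confirms the constants are right.
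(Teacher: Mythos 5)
Your proof is correct and follows essentially the same route as the paper's: expand the squared norm into the double sum over anchors, reduce to one dimension via the tensor-product structure, apply the sum-to-product identity to write $q_s(t-a)q_s(t-b)$ as $e^{-2s}\exp\bigl(2s\cos\pi(a-b)\cos 2\pi(t-\tfrac{a+b}{2})\bigr)$, and read off the Fourier coefficients from the Bessel generating function with the phase shift. Your observation that the $2s$ in the displayed formula should read $2\mathbf{s}_\ell$ is also correct, and your treatment of the phase factor and the complexity is more explicit than the paper's.
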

\begin{proof}
  From its definition in \cref{eq:def-ksos-torus}, we rewrite $g$ as
  \begin{equation}\label{eq:proof-fourier-coeff-bessel-torus}
    g(x) = \sum_{i, j = 1}^m R_i^\top R_j \prod_{\ell = 1}^d K_s(x, \mathbf{z}_{i \ell}) K_s(x, \mathbf{z}_{j \ell}).
  \end{equation}
  Now, from the definition of the Bessel kernel in \cref{eq:def-bessel-kernel}, we have that for any $(x, y, z) \in \TT$, $K(x, y) K(x, z) = e^{-2s} e^{2s \cos (2\pi (y - z)/2) \cos 2\pi (x - (y+z)/2)}$. By definition of the modified Bessel function, the Fourier coefficient of this expression are given by $I_{|\omega|}(2s \cos (2\pi (y - z)/2))$. Using this into \cref{eq:proof-fourier-coeff-bessel-torus}, we get the result.
\end{proof}

The second necessary ingredient for using the certificate of \cref{thm:certificate-cheby} is computing a RKHS norm for $g$. It relies on the inclusion of $\HH_{2\mathbf{s}}$ into the bigger space of symmetric operator $\Sym(\HH_\mathbf{s})$.

\begin{lemma}[Bound on the RKHS norm of $g$]\label{lem:hsnorm-g-torus}
  $g$ belongs to $\HH_{2\mathbf{s}}$, and $\normsm{g}_{\HH_{2\mathbf{s}}}$ is bounded by the Hilbert-Schmidt norm of $\Sym(\HH_\mathbf{s})$, which can be computed in $O(d m^2 + m r^2)$ time, with
  \begin{equation}
    \norm{g}_{\HH_{2\mathbf{s}}}^2 \leq \norm{g}_{\Sym(\HH_s)}^2 = \Tr (R K_{s, \mathbf{z}} R^\top)^2.
  \end{equation}
\end{lemma}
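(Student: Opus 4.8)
The plan is to exhibit $g$ as the "diagonal'' of a feature-space operator and then compare the two RKHS norms through an operator-norm inequality. Write $\phi_s(x) \in \HH_\mathbf{s}$ for the canonical feature map of the Bessel kernel $K_\mathbf{s}$, so that $K_\mathbf{s}(x,x') = \langle \phi_s(x), \phi_s(x')\rangle_{\HH_\mathbf{s}}$. From \cref{def:ksos-torus} we have $g(x) = \normsm{R K_{\mathbf{Z}}(x)}_2^2 = \sum_{k=1}^r \bigl(\sum_{i=1}^m R_{ki} K_\mathbf{s}(x,\mathbf{z}_i)\bigr)^2$. Introduce the finite-rank symmetric operator $M = \sum_{k=1}^r u_k \otimes u_k$ on $\HH_\mathbf{s}$, where $u_k = \sum_{i=1}^m R_{ki}\,\phi_s(\mathbf{z}_i)$; equivalently $M = \Phi_\mathbf{Z}^\top R^\top R\, \Phi_\mathbf{Z}$ with $\Phi_\mathbf{Z}$ stacking the $\phi_s(\mathbf{z}_i)$. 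Then $g(x) = \langle \phi_s(x), M\,\phi_s(x)\rangle = \langle \phi_s(x)\otimes\phi_s(x), M\rangle_{\mathrm{HS}}$. Because $K_\mathbf{s}$ is stable by product, $K_{2\mathbf{s}}(x,x') = K_\mathbf{s}(x,x')^2 = \langle \phi_s(x)\otimes\phi_s(x),\, \phi_s(x')\otimes\phi_s(x')\rangle_{\mathrm{HS}}$, so $\Sym(\HH_\mathbf{s})$ (the space of symmetric Hilbert--Schmidt operators on $\HH_\mathbf{s}$, with the HS inner product) is a reproducing kernel Hilbert space for the kernel $K_{2\mathbf{s}}$, with feature map $x \mapsto \phi_s(x)\otimes\phi_s(x)$.

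Next I invoke the standard fact that if a function admits a representation $g(x) = \langle \Psi(x), v\rangle_\mathcal{V}$ for some element $v$ of a Hilbert space $\mathcal{V}$ whose reproducing kernel is $K_{2\mathbf{s}}$, then $g \in \HH_{2\mathbf{s}}$ and $\normsm{g}_{\HH_{2\mathbf{s}}} \le \normsm{v}_\mathcal{V}$, with equality iff $v$ is the minimal-norm representer (the orthogonal projection of $v$ onto $\overline{\mathrm{span}}\,\Psi(\TT^d)$). Applying this with $\mathcal{V} = \Sym(\HH_\mathbf{s})$, $\Psi(x) = \phi_s(x)\otimes\phi_s(x)$ and $v = M$ gives $\normsm{g}_{\HH_{2\mathbf{s}}}^2 \le \normsm{M}_{\mathrm{HS}}^2 =: \normsm{g}_{\Sym(\HH_s)}^2$. (One must check that $M$ itself need not be the minimal representer — that extra operators orthogonal to all $\phi_s(x)\otimes\phi_s(x)$ may be present — which is precisely why the statement is an inequality rather than an equality.) It remains to identify $\normsm{M}_{\mathrm{HS}}^2$ with the stated trace. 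Since $M = \Phi_\mathbf{Z}^\top R^\top R\, \Phi_\mathbf{Z}$, the nonzero part of $M$ lives on $\mathrm{span}\{\phi_s(\mathbf{z}_i)\}$, and $\normsm{M}_{\mathrm{HS}}^2 = \Tr(M^2) = \Tr\bigl(R^\top R\,\Phi_\mathbf{Z}\Phi_\mathbf{Z}^\top R^\top R\,\Phi_\mathbf{Z}\Phi_\mathbf{Z}^\top\bigr) = \Tr\bigl((R K_{s,\mathbf{z}} R^\top)^2\bigr)$, using $\Phi_\mathbf{Z}\Phi_\mathbf{Z}^\top = K_{s,\mathbf{z}}$ (the $m\times m$ Gram matrix $[K_\mathbf{s}(\mathbf{z}_i,\mathbf{z}_j)]_{ij}$) and cyclicity of the trace.

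Finally I would address the complexity claim. Forming $K_{s,\mathbf{z}}$, an $m\times m$ Gram matrix of a tensor-product kernel over $d$ coordinates, costs $O(dm^2)$. Then $R K_{s,\mathbf{z}} R^\top$ is an $r\times r$ matrix: computing $R K_{s,\mathbf{z}}$ is $O(r m^2)$ and multiplying by $R^\top$ is $O(r^2 m)$; squaring the resulting $r\times r$ matrix and taking its trace is $O(r^3) \le O(r^2 m)$ since $r \le m$. Combining, the dominant terms are $O(dm^2 + rm^2)$; the paper writes this as $O(dm^2 + m r^2)$, which matches when one instead first reduces via $R^\top R$ — in any case it is straightforward bookkeeping and not the substance of the lemma. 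The main obstacle is the conceptual step: correctly setting up $\Sym(\HH_\mathbf{s})$ as the RKHS of $K_{2\mathbf{s}}$ via the squared feature map, and being careful that the map $M \mapsto (x\mapsto \langle\phi_s(x),M\phi_s(x)\rangle)$ is a norm-nonincreasing partial isometry onto $\HH_{2\mathbf{s}}$ — that is, justifying why we get $\le$ and not $=$, and why it is the \emph{symmetric} operators (not all HS operators) that form the right space given $g$ is built from a symmetric $R^\top R$.
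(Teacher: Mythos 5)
Your proposal is correct and follows essentially the same route as the paper: both identify $g$ with the symmetric operator $\Phi_\mathbf{Z} R^\top R\,\Phi_\mathbf{Z}^*$ in $\Sym(\HH_\mathbf{s}) = \HH_\mathbf{s}\otimes\HH_\mathbf{s}$ and use the product-stability $\HH_{2\mathbf{s}} = \HH_\mathbf{s}\odot\HH_\mathbf{s}$ together with the norm-nonincreasing map from the tensor product onto the product space (your ``standard fact'' is exactly Paulsen's Thm.~5.16 that the paper cites). The only cosmetic difference is that the paper first checks $g\in\HH_{2\mathbf{s}}$ separately via the decay of its Bessel Fourier coefficients, whereas you obtain membership directly from the representer argument, and you additionally spell out the trace identity and complexity bookkeeping that the paper leaves implicit.
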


\begin{proof}
  Assume that $d=1$; the reasoning can be extended to multiple dimensions with the tensor product. From the computation of the Fourier coefficient in \cref{lem:fourier-coeff-bessel-torus} and the fact that $I_{|\omega|}(2s \cos(2\pi \cdot)) \leq I_{|\omega|}(2s)$, we have that $\hat{g}_\omega = O(I_{|\omega|}(2s))$ hence $g \in \HH_{2s}$. Finally, since the kernel is stable by product, $\HH_{2s} = \HH_s \odot \HH_s$, so we can use \eg \cite[Thm. 5.16]{paulsenIntroductionTheoryReproducing2016}, with $\HH_1 = \HH_2 = \HH_s$ and $\Sym(\HH_s) = \HH_s \otimes \HH_s$, with the operator $A = (\varphi(\mathbf{z}_1), \dots, \varphi(\mathbf{z}_m)) R^\top R (\varphi(\mathbf{z}_1), \dots, \varphi(\mathbf{z}_m))^* \in \Sym(\HH_s)$.
\end{proof}

With \cref{lem:hsnorm-g-torus}, we have that the model $g$ belongs to $\HH_{2 \mathbf{s}}$, so we will naturally use $\hat{\lambda}_\omega = \prod_{\ell = 1}^d e^{-2\mathbf{s}_\ell} I_{\omega}(2\mathbf{s}_\ell)$ in \cref{thm:certificate-cheby}; said differently, the space $\HH_\lambda$ introduced in \cref{eq:fnorm-variance} is simply $\HH_{2 \mathbf{s}}$ defined in \cref{eq:def-bessel-kernel}.

\subsection{The algorithm: GloptiNets}
\label{sec:algorithm}

We can now describe how GloptiNets yields a certificate on $f$. The key observation is that no matter how is obtained our model $g(R, \mathbf{z})$ from \cref{def:ksos-torus}, we will always be able to compute a certificate with \cref{thm:certificate-cheby,thm:certificate-mom}. Thus, even though optimizing \cref{eq:certificate-cheby} w.r.t $(c, R, \mathbf{z})$ is highly non-convex, we can use any optimization routine and check empirically its efficiency by looking at the certificate.
Finally, thanks to its low-rank structure it is cheaper to evaluate $g$ than evaluating its Fourier coefficient. This is formally shown in \cref{prop:bettermodels-blockdiag} in \cref{sec:extensions}, where a block-diagonal structure for the model is also introduced. That's why we first optimize $\sup_{c, g} c - \norm{f - c - g}_\star$, where $\norm{\cdot}_*$ is a proxy for the $L_\infty$ norm, \eg the log-sum-exp on a random batch of $N$ points\footnote{Another detail of practical importance is that this loss can be efficiently backpropagated through; on the other hand, the certificate is not easily vectorized, and the Bessel function involved would require specific approximation to be efficiently backpropagated through.}:
\begin{equation}
  \norm{f - c - g}_{L_\infty} \approx \max_{i \in [N]} \absv{f(x_i) - c - g(x_i)} \approx \mathrm{LSE} (f(x_i) - c - g(x_i))_{i \in [N]}.
\end{equation}
This optimization can be carried out by any deep learning libraries with automatic differentiation and any flavour of gradient ascent. Only afterwards do we compute the certificate with \cref{thm:certificate-cheby,thm:certificate-mom}. This procedure is summed up in \cref{alg:certificate}.

\begin{algorithm}\label{alg:certificate}
  \caption{GloptiNets}

  \KwData{A trigonometric polynomial $f$, a candidate $z$ s.t. $c = f(z)$, a model $g$, and a probability $\delta$.}

  \KwResult{A certificate $|f_\star - f(z)| \leq \epsilon_\delta$ with proba. $1 - \delta$.}

  \tcc{Optimize $g$ with function values}
  \For{epoch = 1:nepochs}{
  Sample $x_1, \dots, x_N \in \TT^d$ \;
  $L, \nabla L = \mathrm{autodiff}(\mathrm{LSE} (f(x_i) - c - g(x_i))_{i \in [N]})$ \;
    $\mathbf{z}, R \gets \mathrm{optimizer}(\nabla L)$ \;
    }

    \tcc{Compute a certificate}
  $\hat{\lambda}_\omega$: probability distribution on $\ZZ^d$ with $\hat{\lambda}_\omega = \prod_{\ell = 1}^d e^{-2 \mathbf{s}_\ell} I_\omega(2 \mathbf{s}_\ell)$\;
    Sample $\Omega = (\omega_1, \dots, \omega_N) \sim \hat{\lambda}_\omega$ \;
    Compute $M = \MoM_\delta(|\hat{f}_{\omega_i} - c \one_{\omega_i = 0} - \hat{g}_{\omega_i}| / \lambda_{\omega_i})_{i \in [n]}$ and $\bar{\sigma} = \norm{g}_{\Sym(\HH_{\mathbf{s}})}$\;
    Returns $\epsilon_\delta = c - M - 4\sqrt{2} \bar{\sigma} \sqrt{\nicefrac{\log(1/\delta)}{N}}$\;
\end{algorithm}

\begin{remark}[Providing a candidate]
  In \cref{alg:certificate}, a candidate estimate $c$ for the minimum value $f(x_\star)$ is necessary. However, it is possible to overcome this requirement by incorporating $c$ as a learnable parameter within the training loop. Moreover, $x_\star$ can be learned using techniques similar to those in \cite{rudiFindingGlobalMinima2020}: by replacing the lower bound $c$ with a parabola centered at $z$, $z$ becomes a candidate for $x_\star$ with precision corresponding to the tightness of the certificate. Note however that this method introduces additional hyperparameters.
\end{remark}

\subsection{Specific implementation for the Chebychev basis}
\label{sec:cheby-basis-impl}

As already observed in \cite{bachExponentialConvergenceSumofsquares2023}, a result on trigonometric polynomial on $\TT^d$ directly extends to a real polynomials on $[-1, 1]^d$. The reason for that is that minimizing $h$ on $[-1, 1]^d$ amounts to minimizing the trigonometric polynomial $f = h((\cos 2\pi x_1, \dots, \cos 2\pi x_d))$ on $\TT^d$. Note however that $f$ is an even function in all dimension, as for any $x \in \TT^d$, $f(x) = f(x_1, \dots, -x_i, \dots, x_d)$. Thus, approximating $f - f_\star$ with a K-SoS model of \cref{def:ksos-torus} is suboptimal, in the sense that we could approximate $f$ only on $[0, 1/2]^d$, which is $2^{-d}$ smaller. Put differently, the Fourier coefficient of $f$ are real by design: it would be convenient to enforce this structure in the model $g$. This is achieved with \cref{prop:kernel-cheby-basis}.

\begin{proposition}[Kernel defined on the Chebychev basis]\label{prop:kernel-cheby-basis}
  Let $q$ be a real, even function on the torus, bounded by $1$, as in \cref{eq:def-kernel-torus}. Let $K$ be the kernel defined on $[-1, 1]$ by
  \begin{equation}\label{eq:cheby-basis-kernel-def}
    \forall (u, v) \in (0, \nicefrac12), K(\cos 2\pi u, \cos 2\pi v) = \frac{1}{2}(q(u + v) + q(u - v)).
  \end{equation}
  Then $K$ is a symmetric, p.d., hence reproducing kernel, bounded by $1$, with explicit feature map given by
  \begin{equation}\label{eq:cheby-basis-kernel-expansion}
    \forall (x, y) \in [-1, 1], K(x, y) = \hat{q}_0 + \sum_{\omega \in \NN} 2\hat{q}_\omega H_\omega(x) H_\omega(y).
  \end{equation}
\end{proposition}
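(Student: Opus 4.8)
The plan is to verify the claimed expansion \eqref{eq:cheby-basis-kernel-expansion} directly, and then read off positive-definiteness and the bound from it. The key observation is that the definition \eqref{eq:cheby-basis-kernel-def} is exactly the ``polarization'' of the shift-invariant kernel $q$ under the change of variables $x = \cos 2\pi u$, and that the Chebyshev polynomials $H_\omega$ are precisely the functions satisfying $H_\omega(\cos 2\pi u) = \cos(2\pi\omega u)$ (up to the normalization convention the paper uses for $H_\omega$). So the engine of the proof is the product-to-sum identity $\cos(2\pi\omega u)\cos(2\pi\omega v) = \tfrac12(\cos 2\pi\omega(u+v) + \cos 2\pi\omega(u-v))$.

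First I would expand $q$ in its Fourier series as in \eqref{eq:def-kernel-torus}; since $q$ is real and even, $\hat q_\omega = \hat q_{-\omega}$ is real, and we can write $q(t) = \hat q_0 + \sum_{\omega \in \NN, \omega \geq 1} 2\hat q_\omega \cos(2\pi\omega t)$. Substituting $t = u+v$ and $t = u-v$ into \eqref{eq:cheby-basis-kernel-def} and averaging gives
\begin{equation*}
  K(\cos 2\pi u, \cos 2\pi v) = \hat q_0 + \sum_{\omega \geq 1} 2\hat q_\omega \cdot \tfrac12\bigl(\cos 2\pi\omega(u+v) + \cos 2\pi\omega(u-v)\bigr).
\end{equation*}
Applying the product-to-sum identity in reverse turns the bracket into $\cos(2\pi\omega u)\cos(2\pi\omega v) = H_\omega(\cos 2\pi u)\,H_\omega(\cos 2\pi v)$, which yields \eqref{eq:cheby-basis-kernel-expansion} once we substitute back $x = \cos 2\pi u$, $y = \cos 2\pi v$; since $\cos(2\pi\,\cdot)$ is a bijection from $(0,\tfrac12)$ onto $(-1,1)$, this determines $K$ on all of $[-1,1]^2$ (and extends continuously to the closed square). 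Positive-definiteness is then immediate: \eqref{eq:cheby-basis-kernel-expansion} exhibits $K$ as a sum, with nonnegative coefficients $\hat q_0 \geq 0$ and $2\hat q_\omega \geq 0$ (using the hypothesis that $q$ has nonnegative Fourier transform), of the rank-one p.d.\ kernels $(x,y)\mapsto H_\omega(x)H_\omega(y)$; hence $K$ is p.d., symmetric by construction, and the explicit feature map is $x \mapsto (\sqrt{\hat q_0}, (\sqrt{2\hat q_\omega}\,H_\omega(x))_{\omega\geq 1})$. For the bound, evaluate on the diagonal: $K(x,x) = \hat q_0 + \sum_{\omega\geq 1} 2\hat q_\omega H_\omega(x)^2$, and since $|H_\omega(x)| = |\cos 2\pi\omega u| \leq 1$ on $[-1,1]$, this is at most $\hat q_0 + \sum_{\omega\geq 1} 2\hat q_\omega = q(0) = 1$; the general bound $|K(x,y)| \leq 1$ then follows from Cauchy--Schwarz in the feature space, $|K(x,y)| \leq \sqrt{K(x,x)}\sqrt{K(y,y)} \leq 1$.

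The main obstacle is bookkeeping rather than anything conceptual: one must be careful about the normalization of $H_\omega$ (whether $H_\omega$ denotes the classical Chebyshev polynomial $T_\omega$ with $T_\omega(\cos\theta) = \cos\omega\theta$, or a differently scaled variant) so that the factor $2$ in front of $\hat q_\omega H_\omega(x)H_\omega(y)$ comes out exactly right, and about the $\omega = 0$ term which carries weight $\hat q_0$ (not $2\hat q_0$) because $\cos 0 = 1$ appears only once in the symmetric Fourier series. A secondary point worth a sentence is that the identity \eqref{eq:cheby-basis-kernel-def} is only imposed for $(u,v)\in(0,\tfrac12)$, but this half-open interval already sweeps out all of $[-1,1]$ under $\cos(2\pi\cdot)$, and both sides of \eqref{eq:cheby-basis-kernel-expansion} are continuous (the series converges absolutely since $\sum|\hat q_\omega| = q(0) < \infty$), so the formula extends uniquely to the closed square $[-1,1]^2$.
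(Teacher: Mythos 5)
Your proposal is correct and follows essentially the same route as the paper's proof: expand $q$ in its Fourier series, substitute $u\pm v$, and use the product-to-sum identity to exhibit the expansion $\hat q_0 + \sum_{\omega\ge 1}2\hat q_\omega H_\omega(x)H_\omega(y)$ with nonnegative coefficients, from which symmetry, positive-definiteness, and the explicit feature map follow. Your additional verification of the bound $|K(x,y)|\le 1$ via the diagonal and Cauchy--Schwarz is a small completeness bonus over the paper's writeup, which leaves that step implicit.
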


The proof is available in \cref{sec:cheby-kernels}. It simply relies on expanding the definition of $K$ in \cref{eq:cheby-basis-kernel-def}. The resulting expression in \cref{eq:cheby-basis-kernel-expansion} exhibits only cosine terms (in the decomposition of $x \mapsto K(\cos 2\pi x, y)$). This enables to directly extend the PSD models from \cref{def:ksos-torus} with such kernels.
Finally, when used with the Bessel kernel of \cref{eq:def-bessel-kernel}, we recover an easy computation of the Chebychev coefficient, as shown in \cref{lem:cheby-coeff-bessel}, in $O(d r m^2)$ time. This enables to approximate any function expressed on the Chebychev basis. Note that polynomials expressed in other basis can be certified too, by first operating a change of basis.

\section{Experiments}\label{sec:xps}

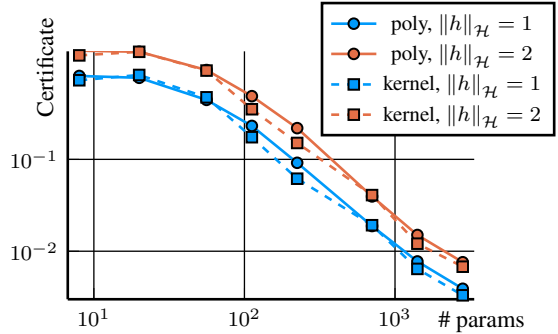
\begin{wrapfigure}{r}{0.5\textwidth}
  \centering
  \begin{tikzpicture}[/tikz/background rectangle/.style={fill={rgb,1:red,1.0;green,1.0;blue,1.0}, fill opacity={1.0}, draw opacity={1.0}}, show background rectangle]
    \begin{axis}[point meta max={nan}, point meta min={nan}, legend cell align={left}, legend columns={1}, title={}, title style={at={{(0.5,1)}}, anchor={south}, font={{\fontsize{14 pt}{18.2 pt}\selectfont}}, color={rgb,1:red,0.0;green,0.0;blue,0.0}, draw opacity={1.0}, rotate={0.0}, align={center}}, legend style={
                    color={rgb,1:red,0.0;green,0.0;blue,0.0}, draw opacity={1.0}, line width={1}, solid, fill={rgb,1:red,1.0;green,1.0;blue,1.0}, fill opacity={1.0}, text opacity={1.0}, font={{\fontsize{8 pt}{10.4 pt}\selectfont}}, text={rgb,1:red,0.0;green,0.0;blue,0.0},
                    cells={anchor={center}},
                    at={(1.2, 1.2)},
                    anchor={north east}},
            axis background/.style={fill={rgb,1:red,1.0;green,1.0;blue,1.0}, opacity={1.0}}, anchor={north west}, xshift={1.0mm}, yshift={-1.0mm},
            width={\linewidth},
            height={48.8mm},
            scaled x ticks={false},
            xlabel={\# params},
            x tick style={color={rgb,1:red,0.0;green,0.0;blue,0.0}, opacity={1.0}},
            x tick label style={color={rgb,1:red,0.0;green,0.0;blue,0.0}, opacity={1.0}, rotate={0}},
            xlabel style={
                    at = {(current axis.south east)},
                    anchor = east, above = -3mm, right=-6mm,
                    font={{\fontsize{9 pt}{10.4 pt}\selectfont}},
                    draw opacity={1.0}, rotate={0.0}},
            xmode={log}, log basis x={10}, xmajorgrids={true}, xmin={6.70955484269096}, xmax={3357.599800311765}, xticklabels={{$10^{1}$,$10^{2}$,$10^{3}$}}, xtick={{10.0,100.0,1000.0}}, xtick align={inside}, xticklabel style={font={{\fontsize{8 pt}{10.4 pt}\selectfont}}, color={rgb,1:red,0.0;green,0.0;blue,0.0}, draw opacity={1.0}, rotate={0.0}}, x grid style={color={rgb,1:red,0.0;green,0.0;blue,0.0}, draw opacity={0.1}, line width={0.5}, solid}, axis x line*={left}, x axis line style={color={rgb,1:red,0.0;green,0.0;blue,0.0}, draw opacity={1.0}, line width={1}, solid}, scaled y ticks={false},
            ylabel={Certificate},
            y tick style={color={rgb,1:red,0.0;green,0.0;blue,0.0}, opacity={1.0}},
            y tick label style={color={rgb,1:red,0.0;green,0.0;blue,0.0}, opacity={1.0}, rotate={0}},
            ylabel style={
                    at={(current axis.north west)},
                    anchor=east, above=3mm, right=-10mm,
                    font={{\fontsize{9 pt}{10.4 pt}\selectfont}}, color={rgb,1:red,0.0;green,0.0;blue,0.0}, draw opacity={1.0}, rotate={0.0}},
            ymode={log}, log basis y={10}, ymajorgrids={true},
            ymin={0.003},
            ymax={1.5}, yticklabels={{$10^{-2}$,$10^{-1}$}}, ytick={{0.01,0.1}}, ytick align={inside}, yticklabel style={font={{\fontsize{8 pt}{10.4 pt}\selectfont}}, color={rgb,1:red,0.0;green,0.0;blue,0.0}, draw opacity={1.0}, rotate={0.0}}, y grid style={color={rgb,1:red,0.0;green,0.0;blue,0.0}, draw opacity={0.1}, line width={0.5}, solid}, axis y line*={left}, y axis line style={color={rgb,1:red,0.0;green,0.0;blue,0.0}, draw opacity={1.0}, line width={1}, solid}, colorbar={false}]

        \addplot[color={rgb,1:red,0.0;green,0.6056;blue,0.9787}, name path={917170d9-0049-4372-9598-d694c0d86dc9}, draw opacity={1.0}, line width={1},
            solid, mark={*},
            mark size={2.0 pt}, mark repeat={1}, mark options={color={rgb,1:red,0.0;green,0.0;blue,0.0}, draw opacity={1.0}, fill={rgb,1:red,0.0;green,0.6056;blue,0.9787}, fill opacity={1.0}, line width={0.75}, rotate={0}, solid}]
        table[row sep={\\}]
            {
                \\
                8.0   0.8090179647132963  \\
                20.0   0.7723767613616833  \\
                56.0   0.444141170740326  \\
                112.0   0.2312722780816005  \\
                224.0   0.09172835056328767  \\
                704.0   0.01880257499556599  \\
                1408.0   0.007740734665726965  \\
                2816.0   0.003902504406917826  \\
            }
        ;
        \addlegendentry {poly, $\norm{h}_{\HH} = 1$}
        \addplot[color={rgb,1:red,0.8889;green,0.4356;blue,0.2781}, name path={1f28e864-0eb6-4a26-b454-36761d43e6bc}, draw opacity={1.0}, line width={1},
            solid, mark={*},
            mark size={2.0 pt}, mark repeat={1}, mark options={color={rgb,1:red,0.0;green,0.0;blue,0.0}, draw opacity={1.0}, fill={rgb,1:red,0.8889;green,0.4356;blue,0.2781}, fill opacity={1.0}, line width={0.75}, rotate={0}, solid}]
        table[row sep={\\}]
            {
                \\
                8.0  1.5244993772969253  \\
                20.0  1.4835550801505626  \\
                56.0  0.9382555697082323  \\
                112.0  0.4875734398446139  \\
                224.0  0.218760294065368  \\
                704.0  0.0394916077226057  \\
                1408.0  0.015019400183125435  \\
                2816.0  0.007589065765454825  \\
            }
        ;
        \addlegendentry {poly, $\norm{h}_{\HH} = 2$}

        \addplot[color={rgb,1:red,0.0;green,0.6056;blue,0.9787}, name path={52ddc890-1061-4adc-bd40-17362a18d9da}, draw opacity={1.0}, line width={1},
            dashed, mark={square*},
            mark size={2.0 pt}, mark repeat={1}, mark options={color={rgb,1:red,0.0;green,0.0;blue,0.0}, draw opacity={1.0}, fill={rgb,1:red,0.0;green,0.6056;blue,0.9787}, fill opacity={1.0}, line width={0.75}, rotate={0}, solid}]
        table[row sep={\\}]
            {
                \\
                8.0  0.7264613302289926  \\
                20.0  0.8292542399109476  \\
                56.0  0.47360249730233744  \\
                112.0  0.17424887029069336  \\
                224.0  0.061589874811719764  \\
                704.0  0.01918802569549731  \\
                1408.0  0.00641988407145201  \\
                2816.0  0.003310339484876269  \\
            }
        ;
        \addlegendentry {kernel, $\norm{h}_{\HH} = 1$}
        \addplot[color={rgb,1:red,0.8889;green,0.4356;blue,0.2781}, name path={1f28e864-0eb6-4a26-b454-36761d43e6bc}, draw opacity={1.0}, line width={1},
            dashed, mark={square*},
            mark size={2.0 pt}, mark repeat={1}, mark options={color={rgb,1:red,0.0;green,0.0;blue,0.0}, draw opacity={1.0}, fill={rgb,1:red,0.8889;green,0.4356;blue,0.2781}, fill opacity={1.0}, line width={0.75}, rotate={0}, solid}]
        table[row sep={\\}]
            {
                \\
                8.0  1.3548678278398683  \\
                20.0  1.4822447879201275  \\
                56.0  0.9258847427972021  \\
                112.0  0.3513071363291544  \\
                224.0  0.1508018963422965  \\
                704.0  0.04082607744113153  \\
                1408.0  0.012082423101419113  \\
                2816.0  0.006782741577486408  \\
            }
        ;
        \addlegendentry {kernel, $\norm{h}_{\HH} = 2$}
    \end{axis}
\end{tikzpicture}
  \vspace{-.5cm}
  \caption{Certificate \emph{vs.} number of parameters in $g$, for a given function $h$. The higher the RKHS norm of $h$, the more difficult it is to approximate uniformly and the looser the certificate, independently of the function type. The more parameters in the k-SoS model, the tighter the certificates obtained with \cref{thm:certificate-mom}.}
  \label{fig:xp-params-all}
  \vspace{-1cm}
\end{wrapfigure}

The code to reproduce these experiments is available at

  {\centering
    \href{https://github.com/gaspardbb/GloptiNets.jl}{\texttt{github.com/gaspardbb/GloptiNets.jl}}
  }

\paragraph{Settings.}
Given a function $h$, we compute a candidate $\hat{x}$ with gradient descent and multiple initializations. The goal is then to certify that $\hat{x}$ is indeed a global minimizer of $h$. This is a common setup in the Polynomial-SoS literature \cite{wangExploitingSparsityComplex2022}.
To illustrate the influence of the number of parameters, the positive model $g$ defined in \cref{def:ksos-torus} for GloptiNets designates either a small model GN-small with 1792 parameters, or a bigger model GN-big with $22528$ parameters. The latter should have higher expressivity and better interpolate positive functions, leading to tighter certificates.
All results for GloptiNets are obtained with confidence $1 - \delta = 1 - e^{-4} \geq 98\%$.
All other details regarding the experiments are reported in \cref{sec:xps-appendix}.

\paragraph{Polynomials.}
We first consider the case where $h$ is a random trigonometric polynomial.
Note that this is a restrictive analysis, as GloptiNets can handle any smooth functions (\ie with infinite non-zero Fourier coefficients). Polynomials have various dimension $d$, degree $p$, number of coefficients $n$, but a constant RKHS norm $\HH_{2\one_d}$.
We compare the performances of GloptiNets to TSSOS, in its complex polynomial variant \cite{wangExploitingSparsityComplex2022}. The latter is used with parameters such that it executes the fastest, but without guarantees of convergence to the global minimum $f_\star$. \Cref{tab:xp-tsssos-fourier} shows the certificates $h(x_\star) - h(\hat{x})$ and the execution times (lower is better, $t$ in seconds) for TSSOS, GN-small and GN-big. \Cref{fig:xp-params-all} provides certificate on a random polynomial, function of the number of parameters in $g$.

\paragraph{Kernel mixtures.}
While polynomials provide ground for comparison with existing work, GloptiNets is not confined to this function class. This is evidenced by experiments on kernel mixtures, where our approach stands as the only viable alternative we are aware of.
The function we certify are of the form $h(x) = \sum_{i=1}^n \alpha_i K(x_i, x)$, where $K$ is the Bessel kernel of \cref{eq:def-bessel-kernel}. Kernel mixtures are ubiquitous in machine learning and arise \eg when performing kernel ridge regression.
Certificates obtained on mixtures are compared with those obtained on polynomials in \cref{fig:xp-params-all}, function of the model size $g$.

\begin{table}
  \caption{GloptiNets and TSSOS on random trigonometric polynomials. While TSSOS provides machine-precision certificates, its running time grows exponentially with the problem size, and eventually fails on problems 3 and 6. On the other hand, GloptiNets has constant running time no matter the problem size, and its certificates can be tightened by increasing the model size.}
  \label{tab:xp-tsssos-fourier}
  \centering
  \begin{tabular}{*{9}{c}}
    \toprule
    \multirow{3}{*}{$d$}         &
    \multirow{3}{*}{$p$}         &
    \multirow{3}{*}{$n$}         &

    \multicolumn{2}{c}{TSSOS}    &
    \multicolumn{2}{c}{GN-small} & \multicolumn{2}{c}{GN-big}                                                                                                                            \\
    \cmidrule(lr){4-5}
    \cmidrule(lr){6-7}
    \cmidrule(lr){8-9}

                                 &                            &       & Certif.              & $t$   &
    Certif.                      & $t$                        &
    Certif.                      & $t$
    \\
    \midrule
    \multirow{3}{*}{$3$}         & $5$                        & $85$  & $5.3 \cdot 10^{-11}$ & $3$   & $8.35 \cdot 10^{-4}$ & $6 \cdot 10^{3}$ & $2.64 \cdot 10^{-4}$ & $9 \cdot 10^{3}$ \\
                                 & $7$                        & $231$ & $4.7 \cdot 10^{-13}$ & $120$ & $9.51 \cdot 10^{-4}$ & $6 \cdot 10^{3}$ & $2.90 \cdot 10^{-4}$ & $9 \cdot 10^{3}$ \\
                                 & $9$                        & $489$ & out of memory!       & -     & $1.18 \cdot 10^{-3}$ & $6 \cdot 10^{3}$ & $3.34 \cdot 10^{-4}$ & $9 \cdot 10^{3}$ \\
    \midrule
    \multirow{3}{*}{$4$}         & $3$                        & $33$  & $3.1 \cdot 10^{-10}$ & $0.1$ & $2.46 \cdot 10^{-2}$ & $1 \cdot 10^{4}$ & $3.45 \cdot 10^{-3}$ & $2 \cdot 10^{4}$ \\
                                 & $5$                        & $225$ & $4.8 \cdot 10^{-12}$ & $53$  & $3.71 \cdot 10^{-2}$ & $1 \cdot 10^{4}$ & $3.59 \cdot 10^{-3}$ & $2 \cdot 10^{4}$ \\
                                 & $7$                        & $833$ & out of memory!       & -     & $4.76 \cdot 10^{-2}$ & $1 \cdot 10^{4}$ & $4.85 \cdot 10^{-3}$ & $2 \cdot 10^{4}$ \\
    \bottomrule
  \end{tabular}
\end{table}

\paragraph{Results.}
There are two key hindsight about the performances of GloptiNets.
Firstly, its certificate \emph{does not depend on the structure} of the function to optimize.
Thus, although GloptiNets does not match the performances of TSSOS on small polynomials, it can tackle polynomials which cannot be handled by competitors, with arbitrarily as many coefficients ($n = \infty$). For instance, TSSOS cannot handle problems with $n \in \{489, 833\}$ in \cref{tab:xp-tsssos-fourier}.
More importantly, GloptiNets can certify a richer class of functions than polynomials, among which kernel mixtures. The performances of GloptiNets mostly depends on the complexity of the function to certify, as measured with its RKHS norm.

Secondly, note that \emph{a bigger model yields tighter certificate}. This is detailed in \cref{fig:xp-params-all}, where the same function $f$ is optimized with various models. The dependency of the certificate on the norm of $f$ is shown in \cref{fig:xp-hnorm2} in \cref{sec:xps-appendix}, along with experiments with Chebychev polynomials.

\section{Limitations}

One limitation of GloptiNets is the trade-off resulting from its high flexibility for obtaining a certificate as in \cref{alg:certificate}. While this flexibility offers numerous advantages, it also introduces the need for an extensive hyperparameter search. Although we have identified a set of hyperparameters that align with deep learning practices -- utilizing a Momentum optimizer with cosine decay and a large initial learning rate -- the optimal settings may vary depending on the specific problem at hand.

In the same vein, the certificates given by GloptiNets are of moderate accuracy. While adding more parameters into the k-SoS model certainly helps (as shown in \cref{fig:xp-params-all}), alternative optimization scheme to interpolate $h - h(\hat{x})$ with $g$ might provide easier improvement. For instance, we found that using approximate second-order scheme in \cref{alg:certificate} is key to obtaining good certificates.

In the specific settings of polynomial optimization, we highlight that our model is not competitive on problems which exhibits some algebraic structure, as for instance term sparsity or the constant trace property. Typically, problems with coefficients of low degrees (less or equal than $2$), which encompass notably the OPF problem, are really well handled by the family of solvers TSSOS belongs to. Finally, GloptiNets does not handle constraints yet.

\section{Conclusion}

The GloptiNets algorithm presented in this work lays the foundation for a new family of solvers which provide certificates to non-convex problems. While our approach does not aim to replace the well-established Lasserre's hierarchy for sparse polynomials, it offers a fresh perspective on tackling a new set of problems at scale. Through demonstrations on synthetic examples, we have showcased the potential of our approach. Further research directions include extensive parameter tuning to obtain tighter certificates, with the possibility of leveraging second-order optimization schemes, along with warm-restart schemes for application which requires solving multiple similar problems sequentially.

\newpage

\paragraph{Acknowledgments.}{
  AR acknowleges support of the French government under management of Agence Nationale de la Recherche as part of the ``Investissements d'avenir'' program, reference ANR-19-P3IA-0001 (PRAIRIE 3IA Institute). AR acknowledges support of the European Research Council (grant REAL 947908). JM was supported by the ERC grant number 101087696 (APHE-LAIA project) and by ANR 3IA MIAI@Grenoble Alpe (ANR-19-P3IA-0003).
}

\bibliographystyle{plainnat}
\bibliography{mybib}

\begin{thebibliography}{31}
\providecommand{\natexlab}[1]{#1}
\providecommand{\url}[1]{\texttt{#1}}
\expandafter\ifx\csname urlstyle\endcsname\relax
  \providecommand{\doi}[1]{doi: #1}\else
  \providecommand{\doi}{doi: \begingroup \urlstyle{rm}\Url}\fi

\bibitem[Bach and Rudi(2023)]{bachExponentialConvergenceSumofsquares2023}
Francis Bach and Alessandro Rudi.
\newblock Exponential convergence of sum-of-squares hierarchies for
  trigonometric polynomials.
\newblock \emph{SIAM Journal on Optimization}, 33\penalty0 (3):\penalty0
  2137--2159, 2023.

\bibitem[Berthier et~al.(2022)Berthier, Carpentier, Rudi, and
  Bach]{berthierInfiniteDimensionalSumsofSquaresOptimal2022}
Elo{\"i}se Berthier, Justin Carpentier, Alessandro Rudi, and Francis Bach.
\newblock Infinite-{{Dimensional Sums-of-Squares}} for {{Optimal Control}}.
\newblock In \emph{2022 {{IEEE}} 61st {{Conference}} on {{Decision}} and
  {{Control}} ({{CDC}})}, pages 577--582, December 2022.
\newblock \doi{10.1109/CDC51059.2022.9992396}.

\bibitem[Boyd and Vandenberghe(2004)]{boyd2004convex}
Stephen~P Boyd and Lieven Vandenberghe.
\newblock \emph{Convex optimization}.
\newblock Cambridge university press, 2004.

\bibitem[Devroye et~al.(2016)Devroye, Lerasle, Lugosi, and
  Oliveira]{devroyeSubGaussianMeanEstimators2016}
Luc Devroye, Matthieu Lerasle, Gabor Lugosi, and Roberto~I. Oliveira.
\newblock Sub-{{Gaussian Mean Estimators}}.
\newblock \emph{The Annals of Statistics}, 44\penalty0 (6):\penalty0
  2695--2725, 2016.

\bibitem[D{\~u}ng et~al.(2017)D{\~u}ng, Temlyakov, and
  Ullrich]{dungHyperbolicCrossApproximation2017}
Dinh D{\~u}ng, Vladimir~N. Temlyakov, and Tino Ullrich.
\newblock Hyperbolic {{Cross Approximation}}.
\newblock \emph{arXiv:2211.04889}, April 2017.

\bibitem[Goodfellow et~al.(2016)Goodfellow, Bengio, and
  Courville]{goodfellow2016deep}
Ian Goodfellow, Yoshua Bengio, and Aaron Courville.
\newblock \emph{Deep learning}.
\newblock MIT press, 2016.

\bibitem[Henrion et~al.(2020)Henrion, Korda, and
  Lasserre]{henrionMomentSOSHierarchy2020}
Didier Henrion, Milan Korda, and Jean-Bernard Lasserre.
\newblock \emph{The {{Moment-SOS Hierarchy}}}, volume~4 of \emph{Optimization
  and Its {{Applications}}}.
\newblock {World Scientific Publishing Europe Ltd.}, December 2020.
\newblock \doi{10.1142/q0252}.

\bibitem[Hilling and Sudbery(2010)]{hillingGeometricMeasureMultipartite2010}
Joseph~J. Hilling and Anthony Sudbery.
\newblock The geometric measure of multipartite entanglement and the singular
  values of a hypermatrix.
\newblock \emph{Journal of Mathematical Physics}, 51\penalty0 (7):\penalty0
  072102, July 2010.

\bibitem[Horst and Pardalos(2013)]{horst2013handbook}
Reiner Horst and Panos~M Pardalos.
\newblock \emph{Handbook of global optimization}, volume~2.
\newblock Springer Science \& Business Media, 2013.

\bibitem[Josz and Molzahn(2018)]{joszLasserreHierarchyLarge2018a}
C{\'e}dric Josz and Daniel~K. Molzahn.
\newblock Lasserre {{Hierarchy}} for {{Large Scale Polynomial Optimization}} in
  {{Real}} and {{Complex Variables}}.
\newblock \emph{SIAM Journal on Optimization}, 28\penalty0 (2):\penalty0
  1017--1048, January 2018.

\bibitem[Lasserre(2001)]{lasserreGlobalOptimizationPolynomials2001}
Jean~B. Lasserre.
\newblock Global {{Optimization}} with {{Polynomials}} and the {{Problem}} of
  {{Moments}}.
\newblock \emph{SIAM Journal on Optimization}, 11\penalty0 (3):\penalty0
  796--817, January 2001.
\newblock \doi{10.1137/S1052623400366802}.

\bibitem[Lasserre(2009)]{lasserreMomentsPositivePolynomials2009}
Jean~Bernard Lasserre.
\newblock \emph{Moments, {{Positive Polynomials}} and {{Their Applications}}},
  volume~1 of \emph{Series on {{Optimization}} and {{Its Applications}}}.
\newblock October 2009.
\newblock \doi{10.1142/p665}.

\bibitem[Laurent and Slot(2022)]{99ae17951e7d4143b7d654705a2ac0c2}
Monique Laurent and Lucas Slot.
\newblock An effective version of schm{\"u}dgen{\textquoteright}s
  positivstellensatz for the hypercube.
\newblock \emph{Optimization Letters}, September 2022.
\newblock \doi{10.1007/s11590-022-01922-5}.

\bibitem[Mai et~al.(2022)Mai, Lasserre, Magron, and
  Wang]{maiExploitingConstantTrace2022}
Ngoc Hoang~Anh Mai, J.~B. Lasserre, Victor Magron, and Jie Wang.
\newblock Exploiting {{Constant Trace Property}} in {{Large-scale Polynomial
  Optimization}}.
\newblock \emph{ACM Transactions on Mathematical Software}, 48\penalty0
  (4):\penalty0 40:1--40:39, December 2022.

\bibitem[{Marteau-Ferey} et~al.(2020){Marteau-Ferey}, Bach, and
  Rudi]{marteau-fereyNonparametricModelsNonnegative2020}
Ulysse {Marteau-Ferey}, Francis Bach, and Alessandro Rudi.
\newblock Non-parametric {{Models}} for {{Non-negative Functions}}.
\newblock In H.~Larochelle, M.~Ranzato, R.~Hadsell, M.~F. Balcan, and H.~Lin,
  editors, \emph{Advances in {{Neural Information Processing Systems}}},
  volume~33, pages 12816--12826. {Curran Associates, Inc.}, 2020.

\bibitem[Moscato et~al.(1989)]{moscato1989evolution}
Pablo Moscato et~al.
\newblock On evolution, search, optimization, genetic algorithms and martial
  arts: Towards memetic algorithms.
\newblock \emph{Caltech concurrent computation program, C3P Report},
  826\penalty0 (1989):\penalty0 37, 1989.

\bibitem[Muzellec et~al.(2021)Muzellec, Vacher, Bach, Vialard, and
  Rudi]{muzellecNearoptimalEstimationSmooth2021}
Boris Muzellec, Adrien Vacher, Francis Bach, Fran{\c c}ois-Xavier Vialard, and
  Alessandro Rudi.
\newblock Near-optimal estimation of smooth transport maps with kernel
  sums-of-squares.
\newblock \emph{arXiv:2112.01907}, December 2021.

\bibitem[Paulsen and
  Raghupathi(2016)]{paulsenIntroductionTheoryReproducing2016}
Vern~I. Paulsen and Mrinal Raghupathi.
\newblock \emph{An {{Introduction}} to the {{Theory}} of {{Reproducing Kernel
  Hilbert Spaces}}}.
\newblock Cambridge {{Studies}} in {{Advanced Mathematics}}. {Cambridge
  University Press}, {Cambridge}, 2016.
\newblock \doi{10.1017/CBO9781316219232}.

\bibitem[Rudi and Ciliberto(2021)]{rudiPSDRepresentationsEffective2021}
Alessandro Rudi and Carlo Ciliberto.
\newblock {{PSD Representations}} for {{Effective Probability Models}}.
\newblock In \emph{Advances in {{Neural Information Processing Systems}}},
  volume~34, pages 19411--19422. {Curran Associates, Inc.}, 2021.

\bibitem[Rudi et~al.(2020)Rudi, {Marteau-Ferey}, and
  Bach]{rudiFindingGlobalMinima2020}
Alessandro Rudi, Ulysse {Marteau-Ferey}, and Francis Bach.
\newblock Finding {{Global Minima}} via {{Kernel Approximations}}.
\newblock \emph{arXiv:2012.11978}, December 2020.

\bibitem[Rudin(1990)]{rudinBasicTheoremsFourier1990}
Walter Rudin.
\newblock The {{Basic Theorems}} of {{Fourier Analysis}}.
\newblock In \emph{Fourier {{Analysis}} on {{Groups}}}, chapter~1, pages 1--34.
  {John Wiley \& Sons, Ltd}, 1990.
\newblock \doi{10.1002/9781118165621.ch1}.

\bibitem[Steinwart and Christmann(2008)]{steinwart2008support}
Ingo Steinwart and Andreas Christmann.
\newblock \emph{Support vector machines}.
\newblock Springer Science \& Business Media, 2008.

\bibitem[Van~Hentenryck(18)]{vanhentenryckMachineLearningOptimal18}
Pascal Van~Hentenryck.
\newblock Machine {{Learning}} for {{Optimal Power Flows}}.
\newblock \emph{INFORMS Tutorials in Operations Research}, October 18.

\bibitem[Van~Laarhoven et~al.(1987)Van~Laarhoven, Aarts, van Laarhoven, and
  Aarts]{van1987simulated}
Peter~JM Van~Laarhoven, Emile~HL Aarts, Peter~JM van Laarhoven, and Emile~HL
  Aarts.
\newblock \emph{Simulated annealing}.
\newblock Springer, 1987.

\bibitem[Waki et~al.(2006)Waki, Kim, Kojima, and
  Muramatsu]{wakiSumsSquaresSemidefinite2006}
Hayato Waki, Sunyoung Kim, Masakazu Kojima, and Masakazu Muramatsu.
\newblock Sums of {{Squares}} and {{Semidefinite Program Relaxations}} for
  {{Polynomial Optimization Problems}} with {{Structured Sparsity}}.
\newblock \emph{SIAM Journal on Optimization}, 17\penalty0 (1):\penalty0
  218--242, January 2006.
\newblock \doi{10.1137/050623802}.

\bibitem[Waldspurger et~al.(2013)Waldspurger, {d'Aspremont}, and
  Mallat]{waldspurgerPhaseRecoveryMaxCut2013}
Ir{\`e}ne Waldspurger, Alexandre {d'Aspremont}, and St{\'e}phane Mallat.
\newblock Phase {{Recovery}}, {{MaxCut}} and {{Complex Semidefinite
  Programming}}, July 2013.

\bibitem[Wang and Magron(2022)]{wangExploitingSparsityComplex2022}
Jie Wang and Victor Magron.
\newblock Exploiting {{Sparsity}} in {{Complex Polynomial Optimization}}.
\newblock \emph{Journal of Optimization Theory and Applications}, 192\penalty0
  (1):\penalty0 335--359, January 2022.

\bibitem[Wang et~al.(2021{\natexlab{a}})Wang, Magron, and
  Lasserre]{wangChordalTSSOSMomentSOSHierarchy2021}
Jie Wang, Victor Magron, and Jean-Bernard Lasserre.
\newblock Chordal-{{TSSOS}}: {{A Moment-SOS Hierarchy That Exploits Term
  Sparsity}} with {{Chordal Extension}}.
\newblock \emph{SIAM Journal on Optimization}, 31\penalty0 (1):\penalty0
  114--141, January 2021{\natexlab{a}}.
\newblock \doi{10.1137/20M1323564}.

\bibitem[Wang et~al.(2021{\natexlab{b}})Wang, Magron, and
  Lasserre]{wangTSSOSMomentSOSHierarchy2021}
Jie Wang, Victor Magron, and Jean-Bernard Lasserre.
\newblock {{TSSOS}}: {{A Moment-SOS Hierarchy That Exploits Term Sparsity}}.
\newblock \emph{SIAM Journal on Optimization}, 31\penalty0 (1):\penalty0
  30--58, January 2021{\natexlab{b}}.
\newblock \doi{10.1137/19M1307871}.

\bibitem[Watson(1922)]{watsonTreatiseTheoryBessel1922}
G.~N. Watson.
\newblock \emph{A {{Treatise}} on the {{Theory}} of {{Bessel Functions}}}.
\newblock {Cambridge University Press}, 1922.

\bibitem[Woodworth et~al.(2022)Woodworth, Bach, and
  Rudi]{woodworthNonConvexOptimizationCertificates2022}
Blake Woodworth, Francis Bach, and Alessandro Rudi.
\newblock Non-{{Convex Optimization}} with {{Certificates}} and {{Fast Rates
  Through Kernel Sums}} of {{Squares}}.
\newblock In \emph{Proceedings of {{Thirty Fifth Conference}} on {{Learning
  Theory}}}, pages 4620--4642. {PMLR}, June 2022.

\end{thebibliography}

\newpage

\appendix

\section{Extensions}
\label{sec:extensions}

We explore additional extensions of GloptiNets that further enhance its appeal. We first describe a block diagonal structure for the model for faster evaluation, a theoretical splitting scheme for optimization, and finally a warm-start scheme.

\subsection{Block diagonal structure for efficient computation}

Without any further assumption, we see that a model from \cref{def:ksos-torus} can be evaluated in $O(drm)$ time; its Fourier coefficient given by \cref{lem:fourier-coeff-bessel-torus} in $O(d m^2 r)$; the bound on the RKHS norm is computed in $O(d m^2 + m r^2)$ time thanks to \cref{lem:hsnorm-g-torus}; all that enables to compute a certificate, as stated in \cref{thm:certificate-cheby}, in $O(N d m^2 r + m r^2)$ time, where $N$ is the number of frequencies sampled. If the function $f$ to be minimized has big $\HH_\mathbf{s}$ norm, we might need a large model size $m$ to have $f - f_\star \approx g$. Hence, we introduce specific structure on $G$ which makes it \emph{block-diagonal} and \emph{better conditioned}.

\begin{proposition}[Block-diagonal PSD model]\label{prop:bettermodels-blockdiag}
  Let $g$ be a PSD model as in \cref{def:ksos-torus}, with $m = bs$ anchors. Split them into $b$ groups, denoting them $\mathbf{z}_{ij}$, $i \in [b]$ and $j \in [s]$. Compute the Cholesky factorization of each kernel matrix $T_i^\top T_i = K_{\mathbf{z}_i} \in \RR^{s \times s}$. Then, define $G$ as a block-diagonal matrix, with $b$ blocks defined as $G_i = \tilde{R}_i \tilde{R}_i^\top$,  $\tilde{R}_i = T^{-1} R_i$, and $R_i \in \RR^{r \times s}$. Equivalently,
  \begin{equation}
    G = \begin{pmatrix}
      \tilde{R}_1 \tilde{R}_1^\top &        &                              \\
                                   & \ddots &                              \\
                                   &        & \tilde{R}_b \tilde{R}_b^\top
    \end{pmatrix},
    ~~\text{s.t.}~~
    g(x) = \sum_{i = 1}^b \norm{\tilde{R}_i^\top K_{\mathbf{z_i}}(x)}^2,
    ~~
    K_{\mathbf{z_i}}(x) = K(\mathbf{z}_{ij}, x)_{1 \leq j \leq s}.
  \end{equation}
  Then $g$ can be evaluated in $O(r b s^3 d)$ time, $\hat{g}_\omega$ in $O(b s^2 (d r + s))$ time, and $\norm{g}_{\Sym(\HH_s)}^2$ in $O(b^2 (r s^2 + r^2 s) + bs^3)$ time. The model has $(r+d)bs$ real parameters.
\end{proposition}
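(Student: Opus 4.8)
The plan is to verify that the block-diagonal construction yields a model of the form in \cref{def:ksos-torus} (so that the earlier lemmas apply), and then to count operations for each of the three quantities separately, exploiting the fact that a block-diagonal $G$ decouples the computation across the $b$ blocks.

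First I would establish the identity $g(x) = \sum_{i=1}^b \normsm{\tilde R_i^\top K_{\mathbf z_i}(x)}^2$. This is immediate from $G = \operatorname{blockdiag}(\tilde R_1\tilde R_1^\top,\dots,\tilde R_b\tilde R_b^\top)$ and $K_{\mathbf Z}(x) = (K_{\mathbf z_1}(x);\dots;K_{\mathbf z_b}(x))$: writing $g(x) = K_{\mathbf Z}(x)^\top G K_{\mathbf Z}(x)$ and using block-diagonality, the cross-block terms vanish. Note the role of the Cholesky factor $T_i$ is purely a reparametrization/preconditioning device: since $T_i^\top T_i = K_{\mathbf z_i}$ and $\tilde R_i = T_i^{-1} R_i$, one has $\tilde R_i^\top K_{\mathbf z_i} \tilde R_i = R_i^\top T_i^{-\top} K_{\mathbf z_i} T_i^{-1} R_i = R_i^\top R_i$, so the Gram structure seen by the frequencies is well-conditioned; this does not affect correctness but is what makes the model ``better conditioned''. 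Each $\tilde R_i^\top K_{\mathbf z_i}(x)$ is an $r$-vector, so $g(x)\ge 0$ and $g$ is a sum of $b$ k-SoS models sharing the same Bessel kernel, hence \cref{lem:fourier-coeff-bessel-torus} and \cref{lem:hsnorm-g-torus} apply blockwise.

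Next, the complexity counts. For evaluation of $g(x)$: forming each $T_i$ by Cholesky costs $O(s^3)$ and computing $\tilde R_i = T_i^{-1} R_i$ by triangular solves costs $O(r s^2)$; then for a given $x$, $K_{\mathbf z_i}(x)\in\RR^s$ costs $O(sd)$ (each of $s$ kernel entries is a $d$-fold product), and $\tilde R_i^\top K_{\mathbf z_i}(x)$ costs $O(rs)$. Summing over $i$ and absorbing lower-order terms into the dominant $O(s^3)$ per block gives $O(rbs^3 d)$ as stated (the $d$ is carried along conservatively). For $\hat g_\omega$: apply \cref{lem:fourier-coeff-bessel-torus} to each block, which has an $s\times s$ matrix $G_i$ and $s$ anchors, giving $O(drs^2)$ per block for the $i,j$ double sum over the $d$-fold Bessel products, plus $O(s^3)$ for the block's Cholesky/solve overhead — total $O(bs^2(dr+s))$. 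For $\normsm{g}_{\Sym(\HH_s)}^2$: since $g=\sum_i g_i$ with each $g_i$ a k-SoS model, the Hilbert–Schmidt norm squared expands as $\sum_{i,i'} \langle A_i, A_{i'}\rangle_{\mathrm{HS}}$ over the $b^2$ block pairs, where $A_i = \Phi_i \tilde R_i^\top \tilde R_i \Phi_i^*$ with $\Phi_i = (\varphi(\mathbf z_{i1}),\dots,\varphi(\mathbf z_{is}))$; each inner product reduces to $\Tr((\tilde R_i \tilde R_i^\top K_{\mathbf z_i \mathbf z_{i'}})(\tilde R_{i'}\tilde R_{i'}^\top K_{\mathbf z_{i'}\mathbf z_i}))$ with $K_{\mathbf z_i\mathbf z_{i'}}\in\RR^{s\times s}$ the cross-kernel matrix, computable in $O(ds^2)$, and the matrix products/traces in $O(rs^2 + r^2 s)$, giving $O(b^2(rs^2+r^2 s))$ plus the $O(bs^3)$ Cholesky overhead. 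The parameter count is $r s$ entries per $R_i$ and $d$ coordinates per anchor block of $s$ points... more precisely $rs$ for $R_i$ and $ds$ for the anchors $\mathbf z_{ij}$, times $b$ blocks, i.e. $(r+d)bs$.

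The main obstacle I expect is not any single calculation but getting the bookkeeping of the Cholesky/preconditioning step consistent across the three cost claims — in particular being careful that $\tilde R_i$ is computed once (cost $O(s^3 + rs^2)$ per block, shared by all three tasks) rather than recomputed, and that the stated bounds correctly absorb this one-time overhead versus the per-frequency or per-evaluation-point costs. A secondary subtlety is justifying $g\in\HH_{2\mathbf s}$ and the HS-norm bound for the \emph{sum} of blocks: this follows because $\HH_{2\mathbf s}$ is a vector space and the bound $\normsm{g}_{\HH_{2\mathbf s}}^2\le\normsm{g}_{\Sym(\HH_{\mathbf s})}^2$ from \cref{lem:hsnorm-g-torus} applies to the single operator $A=\sum_i A_i\in\Sym(\HH_{\mathbf s})$, whose HS norm squared is the $b^2$-term expansion above — so one should present the block structure as merely imposing $A$ to be ``block-diagonal'' in the operator sense, and everything else is inherited from the already-proven lemmas.
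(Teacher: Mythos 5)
Your proposal is correct and follows essentially the same route as the paper's proof: express $g$ as a sum of $b$ blockwise k-SoS terms, apply \cref{lem:fourier-coeff-bessel-torus} per block for $\hat g_\omega$, and expand $\norm{g}_{\Sym(\HH_s)}^2$ over the $b^2$ block pairs as $\sum_{j,k}\Tr(G_j Q_{jk} G_k Q_{kj}) = \sum_{j,k}\normsm{\tilde R_j^\top Q_{jk}\tilde R_k}_{HS}^2$ with the one-time $O(bs^3)$ Cholesky overhead. Your additional remarks on the preconditioning role of $T_i$ and on amortizing the Cholesky cost are consistent with (and slightly more explicit than) the paper's argument.
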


\begin{proof}
  Having $G$ defined as such, it is psd, of rank at most $rb \leq sb = m$. Written $g(x) = \sum_{i = 1}^b \lVert\tilde{R}_i^\top K_{\mathbf{z_i}}(x)\rVert^2$, we can compute the Fourier coefficient by applying \cref{lem:fourier-coeff-bessel-torus} to each of the $b$ component. Adding the cost of computing $G_{i} = \tilde{R}_{i} \tilde{R}_{i}^\top$ results in complexity of $O(b s^2 (d r + s))$. Finally, note that $\norm{g}_{\Sym(\HH_s)}^2 = \norm{A}_{\Sym(\HH_s)}^2$ where
  \begin{equation*}
    A = ((\varphi(\mathbf{z}_{1j}))_{j \in [s]}, \dots, (\varphi(\mathbf{z}_{bj}))_{j \in [s]}) (\Diag G_i)_{i \in [b]} ((\varphi(\mathbf{z}_{1j}))_{j \in [s]}, \dots, (\varphi(\mathbf{z}_{bj}))_{j \in [s]})^*.
  \end{equation*}
  Then, defining $Q$ the matrix of $b \times b$ blocks of size $s \times s$ s.t. for $j, k \in [b]$, $Q_{jk} = K(\mathbf{z}_j, \mathbf{z}_k) \in \RR^{s\times s}$, we have
  \begin{equation}\label{eq:hs-norm-blockdiag-model}
    \norm{A}_{\Sym(\HH_s)}^2 = \Tr Q (\Diag G_i)_{i \in [b]} Q (\Diag G_i)_{i \in [b]} = \sum_{j, k = 1}^b \Tr G_j Q_{jk} G_k Q_{kj},
  \end{equation}
  and each term in the sum can be written $\Tr (\tilde{R}_j^\top Q_{jk} \tilde{R}_k) (\tilde{R}_k^\top Q_{kj} \tilde{R}_j^\top) = \normsm{\tilde{R}_j^\top Q_{jk} \tilde{R}_k}_{HS}^2$, which is computed in $O(r s^2 + r^2 s)$ time, plus $O(b s^3)$ to compute the Cholesky factor.
\end{proof}
Denoting $\varphi_{\mathbf{z}_i} = (\varphi(\mathbf{z}_{ij}))_{1 \leq j \leq s}$, note that
\begin{equation}
  \varphi_{\mathbf{z}_i} G_i \varphi_{\mathbf{z}_i}^* = \varphi_{\mathbf{z}_i} T_i^{-1} R_i R_i^\top (\varphi_{\mathbf{z}_i} T_i^{-1})^* = E_i R_i R_i^\top E_i^*,
\end{equation}
with $E_i = \varphi_{\mathbf{z}_i} T_i^{-1}$ an orthonormal basis of $\vspan (\varphi_{\mathbf{z}_{ij}})_{1 \leq j \leq s}$ as $E_i^* E_i = \II_s$. Thus, each model's coefficient is defined on an orthonormal basis, which makes the optimization easier. Of course, this comes at an added $s^3$ complexity, which could be alleviated by using \eg an incomplete Cholesky factorization instead.

\begin{remark}[Relation to Term Sparsity in POP]
  The successful application of polynomial hierarchies to problems with thousands of variables rely on making the moment matrix $M$ having a block structure \cite{wangTSSOSMomentSOSHierarchy2021,wangChordalTSSOSMomentSOSHierarchy2021}. If the monomial basis has size $m$, the constraint $M \succeq 0$ is replaced with $M = (\Diag M_i)_{i \in [b]}$ and $M_i \succeq 0$. This enables to solve $b$ SDP of size at most $s$ instead of one of size $m$. Our model in \cref{prop:bettermodels-blockdiag} follows a similar route for having a lower computational budget.
\end{remark}

\subsection{Global optimization with splitting scheme}

While GloptiNets can provide certificates for functions, it falls behind local solvers in terms of competitiveness. The challenge lies in the fact that finding a certificate is considerably more difficult than finding a local minimum, as it necessitates the uniform approximation of the entire function. However, we present a novel algorithmic framework that has the potential to enhance the competitiveness of GloptiNets with local solvers while simultaneously delivering certificates. Our approach involves partitioning the search domain into multiple regions and computing lower bounds for each partition. By discarding portions of the domain where we can certify that the function exceeds a certain threshold, the algorithm progressively simplifies the optimization problem and removes areas from consideration. Moreover, such an approach is naturally well suited to parallel computation.

The algorithm relies on a divide-and-conquer mechanism. First, we split the hypercube $(-1, 1)^d$ in $N$ regions, where $N$ is the number of core available. We compute an upper bound with a local solver. For each region, we run GloptiNets \emph{in parallel}, computing a certificate at regular interval. As soon as the certificate is bigger than the upper bound, we stop the process: we know that the global minimum is not in the associated region. We can then reallocate the freed computing power by splitting the biggest current region, which yields an easier problem. We stop as soon as the region considered are small enough. This is summarized in \cref{alg:faster-optim}, where \circled{P} indicates the loop run in parallel.

Note that minimizing $f$ on a hypercube of center $\mu$ and size $\sigma$ amounts to minimizing $x \mapsto f((x - \mu)/\sigma)$ on $[-1, 1]^d$, which is another Chebychev polynomial whose coefficients can be evaluated efficiently thanks to the order-2 relation every orthonormal polynomial satisfy. For Chebychev polynomials, that is $H_{\omega + 1}(x) = 2x H_{\omega}(x) - H_{\omega - 1}(x)$.

\begin{algorithm}\label{alg:faster-optim}
  \caption{Splitting scheme with GloptiNets}

  \KwData{A Chebychev polynomial $f$ with a unique global optimum, a probability $\delta$, a number of cores $N$ and a volume $\rho < 1/N$.}

  \KwResult{A certificate on $f$: $f_\star \geq C_\delta(f)$ with proba. $1 - \delta_\star$.}

  \tcc{Initialization: upper bound and partition}
  $\Pi = \mathsf{partition}([-1, 1]^d, N)$, $\delta_\star = 0$ \;
  \circled{P} $\mathsf{ub} = \min_{\pi \in \Pi} \cb{\mathsf{localsolver}_{x \in \pi} f(x)}$\;

  \tcc{Iterate over the partition}
  \circled{P} \For{$\pi \in \Pi$, \textbf{\emph{While}} $\mathsf{length}(\Pi) > 1$}{
  \While{$C_\delta(f_\pi) < \mathsf{ub}$}{
    Continue optimization;
  }

  Split biggest part: $\pi_0 = \arg \max_{\pi \in \Pi} \mathrm{Vol}(\pi)$; $(\pi_1, \pi_2) = \mathsf{partition}(\pi_0, 2)$ \;
  If $\mathrm{Vol} (\pi_{1, 2}) < \rho$: end this process \;
  Update upper bound: $\mathsf{ub} = \min \cb{\mathsf{ub}, \mathsf{localsolver}_{x \in \pi_{1, 2}} f(x)}$ \;
  Update search space and $\delta_\star$: $\Pi = \Pi \setminus \cb{\pi, \pi_0} \cup \cb{\pi_1, \pi_2} $, $\delta_\star = 1 - (1 - \delta_\star)(1 - \delta)$\;
  }

  \tcc{A single region in $\Pi$ remains}
  Returns $\Pi = \cb{\pi}$, $C_\delta(f_\pi)$, $\delta_\star$\;
\end{algorithm}

\subsection{Warm restarts}

Our model distinguishes itself by leveraging the analytical properties of the objective function, rather than relying solely on algebraic characteristics. This approach offers a notable advantage, as closely related functions can naturally benefit from a warm restart. For example, if we already have a certificate for a function $f$ using a PSD model $g$, and we seek to compute a certificate for a similar function $\tilde{f} \approx f$, we can readily employ GloptiNets by initializing the PSD model with $g$. Indeed, if $f - f_\star \approx g$, we can expect $\tilde{f} - \tilde{f}_\star \approx g$, so we can expect the optimization to be faster.

In contrast, P-SoS methods, which rely on SDP programs, cannot directly adapt to new problems without significant effort. For instance, if a new component is introduced, an entirely new SDP must be solved. Our model's ability to accommodate related yet distinct problems could prove highly valuable in domains with a frequent need to certify different but closely related problems. In the industry, the Optimal Power Flow (OPF) problem requires periodic solves every 5 minutes \cite{vanhentenryckMachineLearningOptimal18}. With GloptiNets, once the initial challenging solve is performed, subsequent solves become easier assuming minimal changes in supply and demand conditions.

\subsection{Optimizing the certificate directly}
\label{sec:optimizing-certificate}

As explained in \cref{sec:algorithm} where GloptiNets is introduced, we optimize a proxy of the $L_\infty$ norm rather than the certificate of \cref{thm:certificate-cheby,thm:certificate-mom}. This proxy is the log-sum-exp on a random batch of $N$ points. The reason for this is that evaluating an extended k-SoS model $g(x)$ on $x \in \TT^d$ requires $O(d r s)$ time, while evaluating $\hat{g}_\omega$ on $\omega \in \ZZ^d$ requires $O(d r s^2)$ time. Yet, optimizing the certificate directly could probably help obtaining higher-precision certificate. \Cref{lem:fourier-coeff-bessel-torus-linear} in \cref{sec:fourier-coeff-linear-time} sketches a method to reduce the computational cost of the Fourier computation from $O(s^2)$ to $O(s)$.

\section{Kernel defined on the Chebychev basis}
\label{sec:cheby-kernels}

In this section we describe the approach we take to model functions written in the Chebychev basis. For $h$ such a polynomial, a naive approach would simply model $f = h \circ \cos (2\pi \cdot)$ as a trigonometric polynomial. However, note that the decomposition of $f$ only has cosine terms. Thus, approximating $f - f_\star$ efficiently requires a PSD model which has only cosine terms in its Fourier decomposition. This is achieved by using a kernel written in the Chebychev basis, as introduce in \cref{prop:kernel-cheby-basis}, for which we now provide a proof.

\begin{proof}[Proof of \cref{prop:kernel-cheby-basis}.]
  Let $x, y \in [-1, 1]$ and $u, v \in [0, \nicefrac{1}{2}]$ s.t. $x, y = \cos (2\pi u), \cos (2 \pi v)$, by bijectivity of the cosine function on $[0, \pi]$. From the definition of $K$ in \cref{eq:cheby-basis-kernel-def} and the definition of $q$ in \cref{eq:def-kernel-torus}, we have that
  \begin{align*}
    K(x, y)
     & = \frac{1}{2} \sum_{\omega \in \ZZ} \hat{q}_\omega \p{e^{2 \pi \ii \omega (u + v)} + e^{2 \pi \ii \omega (u - v)}} \\
     & = \sum_{\omega \in \ZZ} \hat{q}_\omega e^{2 \pi \ii \omega u} \cos(2 \pi \omega v)                                 \\
     & = \hat{q}_0 + 2 \sum_{\omega \in \NN} \hat{q}_\omega \cos(2 \pi \omega u) \cos(2 \pi \omega v)                     \\
     & = \hat{q}_0 + 2 \sum_{\omega \in \NN} \hat{q}_\omega H_\omega(u) H_\omega(v).
  \end{align*}
  Since $q$ has positive Fourier transform, this makes the feature map of $K$ explicit with $K(x, y) = \varphi(u) \cdot \varphi(v)$, $\varphi(u)_\omega = \sqrt{(1 + \one_{\omega \neq 0}) \hat{q}_\omega} H_\omega(u)$, for $\omega \in \NN$. Hence the kernel is a reproducing kernel.
\end{proof}

We now use this kernel with the Bessel function $x \mapsto e^{s (\cos(2 \pi x) - 1)}$, \ie we define the kernel $K$ on $[-1, 1]$ to satisfy
\begin{equation}\label{eq:def-bessel-kernel-cheby}
  \forall u, v \in (0, \nicefrac{1}{2}), ~~ K(\cos(2 \pi u), \cos(2 \pi v)) = \frac{1}{2} \p{e^{s(\cos(2 \pi (u + v))} + e^{s(\cos(2 \pi (u - v))}}.
\end{equation}
As it was the case for the torus, this kernel enables an easy characterization of a RKHS in which an associated PSD model $g$ lives.

\begin{lemma}[Chebychev coefficient of the Bessel kernel]\label{lem:cheby-coeff-bessel}
  Let $g$ be a PSD model as in \cref{def:ksos-torus}, with the kernel $K$ of \cref{eq:def-bessel-kernel-cheby}. Then, the Chebychev coefficient $\omega \in \NN^d$ of $g$ can be computed in $O(r d m^2)$ time with
  \begin{equation}
    g_{\omega} = \sum_{i, j = 1}^m R_i^\top R_j \prod_{\ell = 1}^{d} (1 + \one_{\omega \neq 0}) \frac{e^{-2\mathbf{s}_\ell}}{2} \bigg[
    \begin{aligned}[t]
        & I_{\omega_\ell}(2\mathbf{s}_\ell \sigma_{- \ell i j}) H_{\omega_\ell}(\sigma_{+ \ell i j}) \\
      + & I_{\omega_\ell}(2\mathbf{s}_\ell \sigma_{+ \ell i j}) H_{\omega_\ell}(\sigma_{- \ell i j})
      \bigg]
    \end{aligned}
  \end{equation}
  where
  \begin{equation*}
    \sigma_{\pm \ell i j} = \cos(2\pi m_{\pm \ell i j}),
    ~~
    m_{\pm \ell i j} = (\mathbf{u}_{\ell i j} \pm \mathbf{u}_{\ell i j})/2,
    ~~ \text{and} ~~ \cos 2 \pi \mathbf{u}_{\ell i j} = \mathbf{z}_{\ell i j}.
  \end{equation*}
\end{lemma}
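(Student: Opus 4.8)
The plan is to mirror the proof of \cref{lem:fourier-coeff-bessel-torus}: reduce to dimension $d=1$ using the tensor-product structure of both the kernel and the Chebychev basis, compute the Chebychev expansion of the elementary product $x\mapsto K(z,x)K(z',x)$ for two anchors $z,z'\in[-1,1]$, and reassemble. Writing the model as $g(\mathbf x)=\sum_{i,j=1}^m R_i^\top R_j\prod_{\ell=1}^d K(\mathbf z_{\ell i},\mathbf x_\ell)\,K(\mathbf z_{\ell j},\mathbf x_\ell)$ from \cref{eq:def-ksos-torus}, and using that the multivariate Chebychev polynomials factorize as $\prod_\ell H_{\omega_\ell}(x_\ell)$ (with $H_\omega$ the Chebychev polynomial, $H_\omega(\cos\theta)=\cos\omega\theta$, as in \cref{eq:cheby-basis-kernel-expansion}), the coefficient $g_\omega$ in the expansion $g(\mathbf x)=\sum_{\omega\in\NN^d}g_\omega\prod_\ell H_{\omega_\ell}(\mathbf x_\ell)$ equals $\sum_{i,j}R_i^\top R_j$ times the product over $\ell$ of the $\omega_\ell$-th Chebychev coefficient of the one-dimensional product $x\mapsto K(\mathbf z_{\ell i},x)K(\mathbf z_{\ell j},x)$. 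Hence it suffices to treat $d=1$.

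For the one-dimensional computation I would substitute $z=\cos2\pi u$, $z'=\cos2\pi v$, $x=\cos2\pi t$ with $u,v,t\in[0,\nicefrac12]$, and use the elementary dictionary between Chebychev and Fourier coefficients: if $G$ is a function on $[-1,1]$ and $G(\cos2\pi t)=\sum_{\omega\in\ZZ}c_\omega e^{2\pi\ii\omega t}$ (automatically $c_\omega=c_{-\omega}$), then $G(x)=c_0+\sum_{\omega\geq1}2c_\omega H_\omega(x)$, so the $\omega$-th Chebychev coefficient of $G$ is $(1+\one_{\omega\neq0})\,c_\omega$. This is where the factor $(1+\one_{\omega_\ell\neq0})$ in the statement comes from, and since both the kernel and the Chebychev basis factorize, it enters multiplicatively over the dimensions. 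So the task reduces to Fourier-expanding $t\mapsto K(\cos2\pi u,\cos2\pi t)\,K(\cos2\pi v,\cos2\pi t)$; here I use the $e^{-s}$-normalized kernel of \cref{prop:kernel-cheby-basis} instantiated with $q=q_{\mathbf s_\ell}$ (equivalently \cref{eq:def-bessel-kernel-cheby} with the normalization that makes $K$ bounded by $1$ and consistent with \cref{eq:def-bessel-kernel}), which supplies the $e^{-2\mathbf s_\ell}$ prefactor.

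The core computation expands this product into the four terms $q_{\mathbf s}(u\pm t)\,q_{\mathbf s}(v\pm t)$. Using $q_{\mathbf s}(a)q_{\mathbf s}(b)=e^{-2\mathbf s}e^{\mathbf s(\cos2\pi a+\cos2\pi b)}$ together with the product-to-sum identity $\cos2\pi a+\cos2\pi b=2\cos\pi(a+b)\cos\pi(a-b)$, each term becomes $e^{-2\mathbf s}\exp\!\big(2\mathbf s\,c_{+}c_{-}\big)$ where exactly one of $c_{+}=\cos\pi(a+b)$, $c_{-}=\cos\pi(a-b)$ is independent of $t$; expanding the $t$-dependent cosine with the Bessel generating function $e^{z\cos\theta}=\sum_{k\in\ZZ}I_{|k|}(z)e^{\ii k\theta}$ and reading off the coefficient of $e^{2\pi\ii\omega t}$, the two terms with matching signs $q_{\mathbf s}(u\pm t)q_{\mathbf s}(v\pm t)$ (common difference $u-v$) combine into $2e^{-2\mathbf s}I_{|\omega|}\!\big(2\mathbf s\cos\pi(u-v)\big)\cos\big(\omega\pi(u+v)\big)$, and the two terms with opposite signs $q_{\mathbf s}(u\pm t)q_{\mathbf s}(v\mp t)$ (common sum $u+v$) combine into $2e^{-2\mathbf s}I_{|\omega|}\!\big(2\mathbf s\cos\pi(u+v)\big)\cos\big(\omega\pi(u-v)\big)$. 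Dividing by the $\tfrac14$ prefactor of the expanded product, writing $\sigma_{\pm\ell ij}=\cos\pi(\mathbf u_{\ell i}\pm\mathbf u_{\ell j})$ with $\cos2\pi\mathbf u_{\ell i}=\mathbf z_{\ell i}$, and using $\cos\big(\omega_\ell\arccos\sigma_{\pm}\big)=H_{\omega_\ell}(\sigma_{\pm})$, multiplying by $(1+\one_{\omega_\ell\neq0})$, and finally taking the product over $\ell$ and the sum over $i,j$ weighted by $R_i^\top R_j$ yields the claimed formula. The $O(rdm^2)$ cost follows exactly as in \cref{lem:fourier-coeff-bessel-torus}: there are $m^2$ pairs $(i,j)$, each contributing a length-$r$ inner product $R_i^\top R_j$ times a product of $d$ one-dimensional factors.

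I expect the only genuine obstacle to be the bookkeeping of this four-term expansion — keeping the sign conventions in the product-to-sum identity and the Bessel generating function consistent so that the "common difference" and "common sum" pairs collapse into the stated symmetric expression — together with the minor normalization point that the formula refers to the $e^{-s}$-normalized Bessel kernel of \cref{prop:kernel-cheby-basis}. Everything else (the reduction to $d=1$, the Chebychev$\leftrightarrow$Fourier correspondence, and the complexity count) is routine and parallels \cref{lem:fourier-coeff-bessel-torus} essentially verbatim.
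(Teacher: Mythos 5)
Your proposal is correct and follows essentially the same route as the paper: reduce to $d=1$ by the tensor-product structure, expand $K_s(\cdot,y)K_s(\cdot,z)$ into the four exponential terms, apply the sum-to-product identity in the exponent, and read off the coefficient via the modified-Bessel expansion of $e^{z\cos\theta}$ (the paper packages this last step as its auxiliary \cref{lem:cheby-coeff-expfunc}, which is proved by exactly the generating-function computation you invoke). Your combined expression for the four terms, the $(1+\one_{\omega_\ell\neq 0})e^{-2\mathbf{s}_\ell}/2$ prefactor, and the identification $\cos(2\pi\omega m_\pm)=H_\omega(\sigma_\pm)$ all match the paper's derivation.
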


\begin{proof}
  ~\paragraph{Expanding $g$ and definition of Chebychev coefficient.}
  From the definition of $g$ in \cref{eq:def-ksos-torus}, we have
  \begin{equation}\label{eq:cheby-coeff-psd-model-defmodel}
    g(\mathbf{x}) = \sum_{i, j = 1}^m R_i^\top R_j \prod_{\ell = 1}^d K_{\mathbf{s}_{\ell}} (\mathbf{x}_{\ell}, \mathbf{z}_{\ell i}) K_{\mathbf{s}_{\ell}} (\mathbf{x}_{\ell}, \mathbf{z}_{\ell j}).
  \end{equation}
  We consider $x, y, z \in (-1, 1)$ and $s > 0$. We denote $u, v, w \in (0, 1/2)$ s.t.
  \begin{equation*}
    x, y, z = \cos 2\pi u, \cos 2\pi v, \cos 2\pi w
  \end{equation*}
  with the bijectivity of $x \mapsto \cos(2\pi x)$ on $(0, 1/2)$. We now compute the Chebychev coefficient of $x \mapsto K_s(x, y) K_s(x, z)$. Denoted $p_\omega$, this is
  \begin{equation*}
    \forall \omega \in \NN, ~~ p_\omega = \frac{1 + \one_{\omega \neq 0}}{\pi} \int_{-1}^1 K_s(x, y) K_s(x, z) T_\omega(x) \frac{\dd x}{\sqrt{1 - x^2}},
  \end{equation*}
  or equivalently
  \begin{equation}\label{eq:cheby-coeff-psd-model-def}
    \forall \omega \in \NN, ~~ p_\omega = (1 + \one_{\omega \neq 0})\int_{0}^1 K_s(\cos 2\pi u, \cos 2\pi v) K_s(\cos 2\pi u, \cos 2\pi w) \cos (2\pi \omega u) \dd u.
  \end{equation}

  ~\paragraph{Chebychev coefficient of kernel product.}
  With the definition of the kernel in \cref{prop:kernel-cheby-basis}, \cref{eq:cheby-basis-kernel-def}, we have
  \begin{align*}
    K_s(x, y) K_s(x, z)
     & = \frac{1}{4} \p{p(u + v) + p(u - v)} \times \p{p(u + w) + p(u - w)}                                                               \\
     & = \frac{e^{-2s}}{4} \p{e^{s \cos 2\pi(u + v)} + e^{s \cos 2\pi(u - v)}} \times \p{e^{s \cos 2\pi(u + w)} + e^{s \cos 2\pi(u - w)}}
  \end{align*}
  Now use the sum-to-product formula with the cosines to obtain
  \begin{equation}\label{eq:cheby-coeff-psd-model-1}
    K_s(x, y) K_s(x, z)
    = \frac{e^{-2s}}{4} \Biggl(
    \begin{aligned}[t]
        & e^{2s \cos 2\pi(\frac{v - w}{2}) \cos 2\pi(u + \frac{v + w}{2})}
      +e^{2s \cos 2\pi(\frac{v - w}{2}) \cos 2\pi(u - \frac{v + w}{2})}    \\
      + & e^{2s \cos 2\pi(\frac{v + w}{2}) \cos 2\pi(u + \frac{v - w}{2})}
      +e^{2s \cos 2\pi(\frac{v + w}{2}) \cos 2\pi(u - \frac{v - w}{2})}\Biggr),
    \end{aligned}
  \end{equation}
  We simplify this expression by introducing
  \begin{equation}
    m_\pm = \frac{1}{2}(v \pm w)
    ~~ \text{and} ~~
    \sigma_{\pm} = \cos 2\pi m_\pm.
  \end{equation}
  Then, \cref{eq:cheby-coeff-psd-model-1} becomes
  \begin{equation}\label{eq:cheby-coeff-psd-model-2}
    K_s(x, y) K_s(x, z)
    = \frac{e^{-2s}}{4} \Biggl(
    \begin{aligned}[t]
        & e^{2s \sigma_- \cos 2\pi(u + m_+)}
      +e^{2s \sigma_- \cos 2\pi(u - m_+)}    \\
      + & e^{2s \sigma_+ \cos 2\pi(u + m_-)}
      +e^{2s \sigma_+ \cos 2\pi(u - m_-)}\Biggr).
    \end{aligned}
  \end{equation}
  We recognize the definition of the kernel (which is not a surprise as we chose the kernel to be stable by product). However, we need variables in $(0,1/2)$ to retrieve the proper definition of the kernel. Instead, we use \cref{lem:cheby-coeff-expfunc} on \cref{eq:cheby-coeff-psd-model-2} combined with \cref{eq:cheby-coeff-psd-model-def}, to obtain
  \begin{equation*}
    p_\omega = (1 + \one_{\omega \neq 0}) \frac{e^{-2s}}{4} \Biggl(
    \begin{aligned}[t]
        & \cos(2\pi \omega m_+) I_{\omega}(2s \sigma_-)
      + \cos(2\pi \omega m_+) I_{\omega}(2s \sigma_-)   \\
      + & \cos(2\pi \omega m_-) I_{\omega}(2s \sigma_+)
      + \cos(2\pi \omega m_-) I_{\omega}(2s \sigma_+)\Biggr),
    \end{aligned}
  \end{equation*}
  which gives
  \begin{equation}\label{eq:cheby-coeff-psd-model-end}
    p_\omega = (1 + \one_{\omega \neq 0}) \frac{e^{-2s}}{2} (\cos(2\pi \omega m_+) I_{\omega}(2s \sigma_-) + \cos(2\pi \omega m_-) I_{\omega}(2s \sigma_+)).
  \end{equation}
  \Cref{eq:cheby-coeff-psd-model-end} contains the Chebychev coefficient of the product of two kernel function as defined in \cref{eq:cheby-coeff-psd-model-def}. Plugging this result into the definition of $g$ in \cref{eq:cheby-coeff-psd-model-defmodel}, and noting that $\cos(2\pi \omega m_\pm) = H_\omega(\cos 2\pi m_\pm) = H_\omega(\sigma_\pm)$, we obtain the result.
\end{proof}

Thanks to \cref{lem:cheby-coeff-bessel}, we see that a model $g$ defined as in \cref{def:ksos-torus} with the Bessel kernel $K_{\mathbf{s}}$ of \cref{eq:def-bessel-kernel-cheby} as its Chebychev coefficients decaying in $O(I_\omega(2s))$. Hence, it belongs to $\HH_{2\mathbf{s}}$, the RKHS associated to $K_{2 \mathbf{s}}$.

\section{Additional details on the experiments}
\label{sec:xps-appendix}

\paragraph{Tuning the hyperparameters.}
The time reported in \cref{sec:xps} does \emph{not} take into account the experiments needed to find a good set of hyperparameters. The parameters tuned were the type of optimizer, the decay of learning rate, and the regularization on the Frobenius norm of $G$.

\paragraph{Regularization.}
Regularization is performed by approximating the $HS$ norm with a proxy which is faster to compute. We use $\normsm{R_j^\top R_k}_{HS}^2$ instead of $\normsm{\tilde{R}_j^\top Q_{jk} \tilde{R}_k}_{HS}^2$ in \cref{eq:hs-norm-blockdiag-model}.

\paragraph{Hardware.}
GloptiNets was used with NVIDIA V100 GPUs for the interpolation part, and Intel Xeon CPU E5-2698 v4 @ 2.20GHz for computing the certificate. TSSOS was run on a Apple M1 chip with Mosek solver.

\paragraph{Configuration of TSSOS.}
We use the lowest possible relaxation order $d$ (\ie $\lceil \mathrm{deg} ~ f / 2 \rceil$), along with Chordal sparsity. We use the first relaxation step of the hierarchy. In these settings, TSSOS is not guaranteed to converge to $f_\star$ but will executes the fastest.

\paragraph{Certificate vs. number of parameter for a given function.}
In \cref{fig:xp-params-all}, the target function is a random polynomial of norm $1$ or $2$, or a kernel mixture with $10$ coefficients of norm $1$ or $2$. The models forming the blue line are defined as in \cref{prop:bettermodels-blockdiag}, with rank, block size and number of blocks equal to $(1, bs, 1)$ respectively, with $bs$ the block size we vary. The number of frequencies sampled to compute the certificate is $1.6 \cdot 10^7$, and accounts for the fact that the bound on the variance becomes larger than the MOM estimator for large models.

\paragraph{Certificate vs. problem difficulty for a given model.}
We have 3 related parameters: the quality of the optimization (given by the certificate), the expressivity of the model (given by its number of parameters), and the difficulty of the optimization (given by the norm of the function). In \cref{fig:xp-hnorm2}, we fix the latter and plot the relation between the first two. Here, we fix the model with parameters $(8, 16, 128)$, and we optimize a polynomial in $3d$ of degree $12$, with RKHS norm ranging from $1$ to $20$. The certificates obtained are given in \cref{fig:xp-hnorm2}. The resulting plot exhibits a clear polynomial relation between the certificate and the norm of the function, with a slope of $-0.88$. This suggest that the certificate behaves as $O(\normsm{f}_{\HH_{2 \mathbf{s}}}^{\nicefrac12})$.

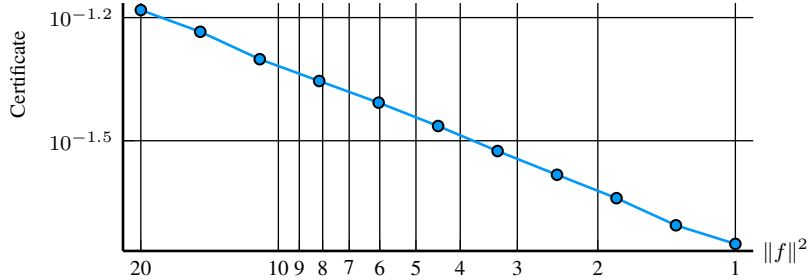
\begin{figure}
  \centering
  \begin{tikzpicture}[/tikz/background rectangle/.style={fill={rgb,1:red,1.0;green,1.0;blue,1.0}, fill opacity={1.0}, draw opacity={1.0}}, show background rectangle]
    \begin{axis}[point meta max={nan}, point meta min={nan}, legend cell align={left}, legend columns={1}, title={}, title style={at={{(0.5,1)}}, anchor={south}, font={{\fontsize{14 pt}{18.2 pt}\selectfont}}, color={rgb,1:red,0.0;green,0.0;blue,0.0}, draw opacity={1.0}, rotate={0.0}, align={center}}, legend style={color={rgb,1:red,0.0;green,0.0;blue,0.0}, draw opacity={1.0}, line width={1}, solid, fill={rgb,1:red,1.0;green,1.0;blue,1.0}, fill opacity={1.0}, text opacity={1.0}, font={{\fontsize{8 pt}{10.4 pt}\selectfont}}, text={rgb,1:red,0.0;green,0.0;blue,0.0}, cells={anchor={center}}, at={(1.02, 1)}, anchor={north west}}, axis background/.style={fill={rgb,1:red,1.0;green,1.0;blue,1.0}, opacity={1.0}}, anchor={north west}, xshift={1.0mm}, yshift={-1.0mm}, width={99.6mm}, height={48.8mm}, scaled x ticks={false},
            xlabel={$\norm{f}^2$},
            x tick style={color={rgb,1:red,0.0;green,0.0;blue,0.0}, opacity={1.0}},
            x tick label style={color={rgb,1:red,0.0;green,0.0;blue,0.0}, opacity={1.0}, rotate={0}},
            xlabel style={
                    at = {(current axis.south east)},
                    anchor = east, above = 0mm, right=0mm,
                    font={{\fontsize{8 pt}{10.4 pt}\selectfont}},
                    draw opacity={1.0}, rotate={0.0}},
            x dir={reverse}, xmode={log}, log basis x={10}, xmajorgrids={true}, xmin={0.914048880651187}, xmax={21.880700245163318}, xticklabels={{1,2,3,4,5,6,7,8,9,10,20}}, xtick={{1.0,2.0,3.0,4.0,5.000000000000001,6.0,7.0,7.999999999999999,9.0,10.0,20.000000000000004}}, xtick align={inside}, xticklabel style={font={{\fontsize{8 pt}{10.4 pt}\selectfont}}, color={rgb,1:red,0.0;green,0.0;blue,0.0}, draw opacity={1.0}, rotate={0.0}}, x grid style={color={rgb,1:red,0.0;green,0.0;blue,0.0}, draw opacity={0.1}, line width={0.5}, solid}, axis x line*={left}, x axis line style={color={rgb,1:red,0.0;green,0.0;blue,0.0}, draw opacity={1.0}, line width={1}, solid}, scaled y ticks={false},
            ylabel={Certificate},
            y tick style={color={rgb,1:red,0.0;green,0.0;blue,0.0}, opacity={1.0}},
            y tick label style={color={rgb,1:red,0.0;green,0.0;blue,0.0}, opacity={1.0}, rotate={0}},
            ylabel style={
                    at={(current axis.west)},
                    anchor={near ticklabel}, above=14mm, right=0mm,
                    font={{\fontsize{8 pt}{10.4 pt}\selectfont}}, color={rgb,1:red,0.0;green,0.0;blue,0.0}, draw opacity={1.0}, rotate={0.0}},
            ymode={log}, log basis y={10}, ymajorgrids={true}, ymin={0.017041288919738818}, ymax={0.06847068530905714}, yticklabels={{$10^{-1.5}$,$10^{-1.2}$}}, ytick={{0.03162277660168378,0.0630957344480193}}, ytick align={inside}, yticklabel style={font={{\fontsize{8 pt}{10.4 pt}\selectfont}}, color={rgb,1:red,0.0;green,0.0;blue,0.0}, draw opacity={1.0}, rotate={0.0}}, y grid style={color={rgb,1:red,0.0;green,0.0;blue,0.0}, draw opacity={0.1}, line width={0.5}, solid}, axis y line*={left}, y axis line style={color={rgb,1:red,0.0;green,0.0;blue,0.0}, draw opacity={1.0}, line width={1}, solid}, colorbar={false}]
        \addplot[color={rgb,1:red,0.0;green,0.6056;blue,0.9787}, name path={794f93f8-243b-497f-a7ab-f2bbd0f76d2e}, draw opacity={1.0}, line width={1}, solid, mark={*}, mark size={2.0 pt}, mark repeat={1}, mark options={color={rgb,1:red,0.0;green,0.0;blue,0.0}, draw opacity={1.0}, fill={rgb,1:red,0.0;green,0.6056;blue,0.9787}, fill opacity={1.0}, line width={0.75}, rotate={0}, solid}]
        table[row sep={\\}]
            {
                \\
                1.0000007391736179  0.017725432569647294  \\
                1.3492838450278488  0.01967877420704025  \\
                1.8205655487391106  0.022912873422448587  \\
                2.4564578679790934  0.026127599150656336  \\
                3.314456467296962  0.02984750872548825  \\
                4.472139260684501  0.03434393717512167  \\
                6.03418079684913  0.03916007116964758  \\
                8.14181664894972  0.0442063665949837  \\
                10.985613553330916  0.049961873909900124  \\
                14.82269993867963  0.058278503471846876  \\
                20.000014783472388  0.06582794108405651  \\
            }
        ;
    \end{axis}
\end{tikzpicture}
  \caption{Certificate \emph{vs.} RKHS norm of $f$, for a given model $g$ with a fixed number of parameters. $f$ has 1146 coefficients and $g$ has 22528 parameters. Best certificate is kept among a set of optimization hyperparameters. As the norm of $f$ decreases, fitting $f - f_\star$ with $g$ is easier and the certificate becomes tighter.}
  \label{fig:xp-hnorm2}
\end{figure}

\paragraph{Comparison with TSSOS on the Fourier basis.}
In \cref{tab:xp-tsssos-fourier}, the polynomials $f$ all have a RKHS norm of $1$. The small model is defined as in \cref{prop:bettermodels-blockdiag}, with rank, block size and number of blocks equal to $4, 32, 8$ respectively. For the big models, those values are $8, 128, 16$. The certificate is the maximum of the Chebychev bound of \cref{thm:certificate-cheby} and the MoM bound of \cref{thm:certificate-mom}. The number of frequencies sampled is $3.2 \cdot 10^7$.

\paragraph{Comparison with TSSOS on the Chebychev basis.}
We compare GloptiNets with TSSOS on random Chebychev polynomials in \cref{tab:xp-tssos-cheby}, similarly to the comparison with trigonometric polynomials in \cref{tab:xp-tsssos-fourier}. Minimizing polynomials defined on the canonical basis is easier: contrary to trigonometric polynomials, there is no need to account for the imaginary part of the variable. If $d$ is the dimension, complex polynomials are encoded in a variable of dimension $2d$ in TSSOS, following the definition of Hermitian Sum-of-Squares introduced in \cite{joszLasserreHierarchyLarge2018a}. Hence, the random polynomials we consider are characterized by the dimension $d$ and their number of coefficients $n$; instead of bounding the degree, we use all the basis elements $H_{\omega}(\mathbf{x}) = \prod_{\ell = 1}^d H_{\omega_\ell} (\mathbf{x}_\ell)$ for which $\norm{\omega}_\infty \leq p$. The maximum degree is then $dp$. The RKHS norm of $f$ is fixed to $1$. As with the comparison on Trigonometric polynomial \cref{tab:xp-tsssos-fourier}, we see that GloptiNets provides similar certificates no matter the number of coefficients in $f$. Even though it lags behind TSSOS for small polynomials, it handles large polynomials which are intractable to TSSOS. The ``small'' and ``big'' models have the same structure as for the trigonometric polynomials experiments.

\begin{table}
  \caption{GloptiNets and TSSOS on random Chebychev polynomials. The same conclusion as in \cref{tab:xp-tsssos-fourier} applies. While TSSOS is very efficient on small problems, its memory requirements grow exponentially with the problem size. GloptiNets has less accuracy, but a computational burden which does not increase with the problem size.}
  \label{tab:xp-tssos-cheby}
  \centering
  \begin{tabular}{*{9}{c}}
    \toprule
    \multirow{3}{*}{$d$}         &
    \multirow{3}{*}{$p$}         &
    \multirow{3}{*}{$n$}         &

    \multicolumn{2}{c}{TSSOS}    &
    \multicolumn{2}{c}{GN-small} & \multicolumn{2}{c}{GN-big}                                                                                                                      \\
    \cmidrule(lr){4-5}
    \cmidrule(lr){6-7}
    \cmidrule(lr){8-9}

                                 &                            &        & Certif.             & $t$   &
    Certif.                      & $t$                        &
    Certif.                      & $t$
    \\
    \midrule
    \multirow{3}{*}{$4$}         & $3$                        & $255$  & $3.4 \cdot 10^{-7}$ & $6$   & $1.1 \cdot 10^{-2}$ & $2 \cdot 10^2$ & $4.1 \cdot 10^{-3}$ & $1 \cdot 10^3$ \\
                                 & $4$                        & $624$  & $2.1 \cdot 10^{-9}$ & $153$ & $2.5 \cdot 10^{-2}$ & $2 \cdot 10^2$ & $3.6 \cdot 10^{-3}$ & $1 \cdot 10^3$ \\
                                 & $5$                        & $1295$ & Out of memory!      & -     & $1.8 \cdot 10^{-2}$ & $2 \cdot 10^2$ & $4.2 \cdot 10^{-3}$ & $2 \cdot 10^3$ \\

    \bottomrule
  \end{tabular}
\end{table}

\paragraph{Sampling from the Bessel distribution.}
The function $\omega \mapsto e^{-s} I_{\omega}(s)$ decays rapidly. In fact, with $s = 2$, which is the value used to generate the random polynomials, it falls under machine precision as soon as $\omega > 14$. Thus, we approximate the distribution with a discrete one with weights $I_\omega(s)$ for $\omega$ s.t. the result is above the machine precision. We then extend it to multiple dimension with a tensor product. Finally, we use a hash table to store the already sampled frequency, to make the evaluation of million of frequencies much faster. For instance in dimension $5$, sampling $10^6$ frequencies from the Bessel distribution of parameter $s = 2$ on $\NN^5$ yields only $\approx 10^4$ unique frequencies. This allows for tighter certificates, as it makes the r.h.s of \cref{eq:certificate-cheby}, in $\nicefrac{1}{N}$, much smaller. Note that the time to generate this hash table is \emph{not} reported in \cref{tab:xp-tsssos-fourier,tab:xp-tssos-cheby}, and of the order of a few seconds.

\paragraph{Optimizing a kernel mixture.}
As it is the case with polynomials, when optimizing a function of the form $h(x) = \sum_{i=1}^n \alpha_i K(x_i, x)$ the certificate provided by GloptiNets only depends on the function norm $\norm{h}_\HH^2$ and not on \eg the number of coefficients $n$. This is illustrated in \cref{fig:xp-params-kernel}.

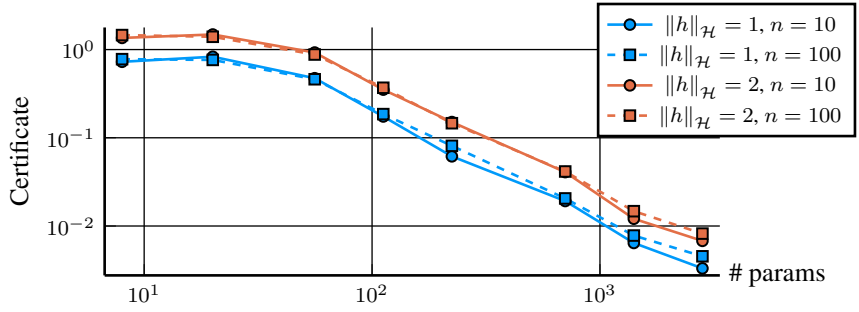
\begin{figure}
  \centering
  \begin{tikzpicture}[/tikz/background rectangle/.style={fill={rgb,1:red,1.0;green,1.0;blue,1.0}, fill opacity={1.0}, draw opacity={1.0}}, show background rectangle]
    \begin{axis}[point meta max={nan}, point meta min={nan}, legend cell align={left}, legend columns={1}, title={}, title style={at={{(0.5,1)}}, anchor={south}, font={{\fontsize{14 pt}{18.2 pt}\selectfont}}, color={rgb,1:red,0.0;green,0.0;blue,0.0}, draw opacity={1.0}, rotate={0.0}, align={center}}, legend style={color={rgb,1:red,0.0;green,0.0;blue,0.0}, draw opacity={1.0}, line width={1}, solid, fill={rgb,1:red,1.0;green,1.0;blue,1.0}, fill opacity={1.0}, text opacity={1.0}, font={{\fontsize{8 pt}{10.4 pt}\selectfont}}, text={rgb,1:red,0.0;green,0.0;blue,0.0}, cells={anchor={center}}, at={(0.8, 1.1)}, anchor={north west}}, axis background/.style={fill={rgb,1:red,1.0;green,1.0;blue,1.0}, opacity={1.0}}, anchor={north west}, xshift={1.0mm}, yshift={-1.0mm},
            width={97.6mm}, height={48.8mm}, scaled x ticks={false}, xlabel={\# params}, x tick style={color={rgb,1:red,0.0;green,0.0;blue,0.0}, opacity={1.0}}, x tick label style={color={rgb,1:red,0.0;green,0.0;blue,0.0}, opacity={1.0}, rotate={0}},
            xlabel style={
                    at = {(current axis.south east)},
                    anchor = east, above = 0mm, right=0mm,
                    draw opacity={1.0}, rotate={0.0}},
            xmode={log}, log basis x={10}, xmajorgrids={true}, xmin={6.70955484269096}, xmax={3357.599800311765}, xticklabels={{$10^{1}$,$10^{2}$,$10^{3}$}}, xtick={{10.0,100.0,1000.0}}, xtick align={inside}, xticklabel style={font={{\fontsize{8 pt}{10.4 pt}\selectfont}}, color={rgb,1:red,0.0;green,0.0;blue,0.0}, draw opacity={1.0}, rotate={0.0}}, x grid style={color={rgb,1:red,0.0;green,0.0;blue,0.0}, draw opacity={0.1}, line width={0.5}, solid}, axis x line*={left}, x axis line style={color={rgb,1:red,0.0;green,0.0;blue,0.0}, draw opacity={1.0}, line width={1}, solid}, scaled y ticks={false},
            ylabel={Certificate}, y tick style={color={rgb,1:red,0.0;green,0.0;blue,0.0}, opacity={1.0}}, y tick label style={color={rgb,1:red,0.0;green,0.0;blue,0.0}, opacity={1.0}, rotate={0}},
            ylabel style={
                    at={(ticklabel cs:0.5)}, anchor=near ticklabel, at={{(ticklabel cs:0.5)}}, anchor={near ticklabel},
                    color={rgb,1:red,0.0;green,0.0;blue,0.0}, draw opacity={1.0}, rotate={0.0}
                },
            ymode={log}, log basis y={10}, ymajorgrids={true}, ymin={0.002756392834676026}, ymax={1.7801285019958923}, yticklabels={{$10^{-2}$,$10^{-1}$,$10^{0}$}}, ytick={{0.01,0.1,1.0}}, ytick align={inside}, yticklabel style={font={{\fontsize{8 pt}{10.4 pt}\selectfont}}, color={rgb,1:red,0.0;green,0.0;blue,0.0}, draw opacity={1.0}, rotate={0.0}}, y grid style={color={rgb,1:red,0.0;green,0.0;blue,0.0}, draw opacity={0.1}, line width={0.5}, solid}, axis y line*={left}, y axis line style={color={rgb,1:red,0.0;green,0.0;blue,0.0}, draw opacity={1.0}, line width={1}, solid}, colorbar={false}]
        \addplot[color={rgb,1:red,0.0;green,0.6056;blue,0.9787}, name path={52ddc890-1061-4adc-bd40-17362a18d9da}, draw opacity={1.0}, line width={1}, solid, mark={*}, mark size={2.0 pt}, mark repeat={1}, mark options={color={rgb,1:red,0.0;green,0.0;blue,0.0}, draw opacity={1.0}, fill={rgb,1:red,0.0;green,0.6056;blue,0.9787}, fill opacity={1.0}, line width={0.75}, rotate={0}, solid}]
        table[row sep={\\}]
            {
                \\
                8.0  0.7264613302289926  \\
                20.0  0.8292542399109476  \\
                56.0  0.47360249730233744  \\
                112.0  0.17424887029069336  \\
                224.0  0.061589874811719764  \\
                704.0  0.01918802569549731  \\
                1408.0  0.00641988407145201  \\
                2816.0  0.003310339484876269  \\
            }
        ;
        \addlegendentry {$\norm{h}_{\HH} = 1$, $n=10$}
        \addplot[color={rgb,1:red,0.0;green,0.6056;blue,0.9787}, name path={4ade021a-2edd-4ae8-8d5d-811676a7aa9a}, draw opacity={1.0}, line width={1}, dashed, mark={square*}, mark size={2.0 pt}, mark repeat={1}, mark options={color={rgb,1:red,0.0;green,0.0;blue,0.0}, draw opacity={1.0}, fill={rgb,1:red,0.0;green,0.6056;blue,0.9787}, fill opacity={1.0}, line width={0.75}, rotate={0}, solid}]
        table[row sep={\\}]
            {
                \\
                8.0  0.7784677167924532  \\
                20.0  0.7631825414831318  \\
                56.0  0.46062379526314673  \\
                112.0  0.1854905906114941  \\
                224.0  0.08106776321250875  \\
                704.0  0.020543767581463375  \\
                1408.0  0.0077963066616150946  \\
                2816.0  0.004534432646656381  \\
            }
        ;
        \addlegendentry {$\norm{h}_{\HH} = 1$, $n=100$}
        \addplot[color={rgb,1:red,0.8889;green,0.4356;blue,0.2781}, name path={1f28e864-0eb6-4a26-b454-36761d43e6bc}, draw opacity={1.0}, line width={1}, solid, mark={*}, mark size={2.0 pt}, mark repeat={1}, mark options={color={rgb,1:red,0.0;green,0.0;blue,0.0}, draw opacity={1.0}, fill={rgb,1:red,0.8889;green,0.4356;blue,0.2781}, fill opacity={1.0}, line width={0.75}, rotate={0}, solid}]
        table[row sep={\\}]
            {
                \\
                8.0  1.3548678278398683  \\
                20.0  1.4822447879201275  \\
                56.0  0.9258847427972021  \\
                112.0  0.3513071363291544  \\
                224.0  0.1508018963422965  \\
                704.0  0.04082607744113153  \\
                1408.0  0.012082423101419113  \\
                2816.0  0.006782741577486408  \\
            }
        ;
        \addlegendentry {$\norm{h}_{\HH} = 2$, $n=10$}
        \addplot[color={rgb,1:red,0.8889;green,0.4356;blue,0.2781}, name path={3d3931f3-90ca-48fe-b04b-01432b72cedf}, draw opacity={1.0}, line width={1}, dashed, mark={square*}, mark size={2.0 pt}, mark repeat={1}, mark options={color={rgb,1:red,0.0;green,0.0;blue,0.0}, draw opacity={1.0}, fill={rgb,1:red,0.8889;green,0.4356;blue,0.2781}, fill opacity={1.0}, line width={0.75}, rotate={0}, solid}]
        table[row sep={\\}]
            {
                \\
                8.0  1.4597268527940734  \\
                20.0  1.395609909126797  \\
                56.0  0.8784558020299209  \\
                112.0  0.369025508586507  \\
                224.0  0.14624034503151181  \\
                704.0  0.04149010131365897  \\
                1408.0  0.014724659721149695  \\
                2816.0  0.00820483276704984  \\
            }
        ;
        \addlegendentry {$\norm{h}_{\HH} = 2$, $n=100$}
    \end{axis}
\end{tikzpicture}
  \caption{Certificate vs. number of parameters in $g$ when certifying mixture of Bessel functions, characterized by their RKHS norm \emph{($1$ in blue, $2$ in red)} and their number of coefficients \emph{($10$ in circles, $100$ in rectangles)}. As with polynomials, this shows that GloptiNets is only sensible to the former, and not to the way the function is represented. We are not aware of other algorithms able to certify this class of functions.}
  \label{fig:xp-params-kernel}
\end{figure}

\section{Fourier coefficients in linear time}
\label{sec:fourier-coeff-linear-time}

\begin{lemma}[Fourier coefficient of the Bessel kernel in linear time]\label{lem:fourier-coeff-bessel-torus-linear}
  Let $g$ be an extended k-SoS model as in \cref{def:ksos-torus}. Then, its Fourier transform can be evaluated in \emph{linear} time in $m$ with
  \begin{equation}
    \hat{g}_\omega
    = \sum_{k = 1}^r \sum_{n \in \ZZ^d} \p{\sum_{i = 1}^m R_{ki} \prod_{\ell = 1}^d \phi_{\ell, -} (\mathbf{z}_{i \ell})_{n_\ell}} \cdot \p{\sum_{i = 1}^m R_{ki} \prod_{\ell = 1}^d \phi_{\ell, +} (\mathbf{z}_{i \ell})}
  \end{equation}
  where
  \begin{equation*}
    \forall n \in \ZZ, z \in \TT, \ell \in [d], ~~ \phi_{\ell, \pm} (z)_n = \sqrt{q_{\ell, n}} e^{\pi \ii (n \pm \omega_\ell) z}
  \end{equation*}
  and $q_{\cdot, \cdot}(s)$ is defined with \cref{lem:fourier-bessel-cos}.
\end{lemma}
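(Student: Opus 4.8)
The plan is to reduce everything to a one-dimensional \emph{separable decomposition} of the Fourier coefficient of a product of two kernel slices, and then to exchange the order of summation so that the double sum over anchors collapses into a product of single sums. I would start exactly as in the proof of \cref{lem:fourier-coeff-bessel-torus}: write $g(\mathbf{x}) = \sum_{k=1}^r\big(\sum_{i=1}^m R_{ki}K_{\mathbf{s}}(\mathbf{x},\mathbf{z}_i)\big)^2 = \sum_{k=1}^r\sum_{i,j=1}^m R_{ki}R_{kj}\prod_{\ell=1}^d K_{\mathbf{s}_\ell}(\mathbf{x}_\ell,\mathbf{z}_{i\ell})K_{\mathbf{s}_\ell}(\mathbf{x}_\ell,\mathbf{z}_{j\ell})$, so that by the tensor structure of the Bessel kernel $\hat{g}_\omega = \sum_{k,i,j}R_{ki}R_{kj}\prod_\ell c_{\omega_\ell}(\mathbf{z}_{i\ell},\mathbf{z}_{j\ell})$, where $c_\nu(y,z)$ denotes the Fourier coefficient at frequency $\nu$ of the one-dimensional function $x\mapsto K_s(x,y)K_s(x,z)$. \Cref{lem:fourier-coeff-bessel-torus} already gives $c_\nu(y,z) = e^{-2s}I_{|\nu|}(2s\cos\pi(y-z))\,e^{-\ii\pi\nu(y+z)}$, but in that form the only way to use it is to evaluate it on each of the $m^2$ anchor pairs, which is the bottleneck we want to avoid.

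The key step is to rewrite $c_\nu(y,z)$ as a single sum over an auxiliary integer $n$ of a term depending only on $(y,n)$ times a term depending only on $(z,n)$. This is precisely what \cref{lem:fourier-bessel-cos} supplies: expanding $t\mapsto I_{|\nu|}(2s\cos\pi t)$ into its Fourier series $I_{|\nu|}(2s\cos\pi t)=\sum_n q_{\nu,n}(s)\,e^{\pi\ii n t}$ — the coefficients $q_{\nu,n}(s)$ being obtained from the Bessel generating function $e^{s\cos\psi}=\sum_k I_{|k|}(s)e^{\ii k\psi}$ applied twice, and turning out to be products of two modified Bessel functions of nonnegative integer order, hence $q_{\nu,n}(s)\ge0$ — then substituting $t=y-z$ and distributing the factor $e^{-\ii\pi\nu(y+z)}$ onto $y$ and $z$ separately yields $c_\nu(y,z)=\sum_n\phi_-(y)_n\,\phi_+(z)_n$, where each of $\phi_\pm(w)_n$ is $\sqrt{q_{\nu,n}(s)}$ times a unit-modulus exponential in $w$ of frequency $n\pm\nu$ — exactly the $\phi_{\ell,\pm}$ of the statement, up to the sign conventions there. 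Nonnegativity of $q_{\nu,n}(s)$ is what makes the symmetric square root legitimate; if one preferred not to rely on it, the sign could simply be absorbed entirely into $\phi_+$, which does not change the product.

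Plugging this into $\hat{g}_\omega$, taking the product over $\ell$, and moving the absolutely convergent sum over the auxiliary multi-index $\mathbf{n}\in\ZZ^d$ to the outside, the remaining quantity $\sum_{i,j}R_{ki}R_{kj}\prod_\ell\phi_{\ell,-}(\mathbf{z}_{i\ell})_{n_\ell}\,\phi_{\ell,+}(\mathbf{z}_{j\ell})_{n_\ell}$ factors, for each fixed $k$ and $\mathbf{n}$, as $\big(\sum_i R_{ki}\prod_\ell\phi_{\ell,-}(\mathbf{z}_{i\ell})_{n_\ell}\big)\big(\sum_j R_{kj}\prod_\ell\phi_{\ell,+}(\mathbf{z}_{j\ell})_{n_\ell}\big)$ — which is the asserted identity. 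For the complexity: since $q_{\nu,n}(s)$ decays super-exponentially in $|n|$, only a number of multi-indices $\mathbf{n}$ independent of $m$ contributes up to machine precision, and for each of them and each $k\in[r]$ the two inner sums cost $O(md)$; hence $\hat{g}_\omega$ is obtained in $O(rmd)$ time, linear in $m$, as opposed to the $O(drm^2)$ of \cref{lem:fourier-coeff-bessel-torus}.

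The main obstacle is \cref{lem:fourier-bessel-cos} itself — obtaining the Fourier series of $t\mapsto I_{|\nu|}(2s\cos\pi t)$ in closed form and, crucially, proving that its coefficients are nonnegative so that $\sqrt{q_{\nu,n}(s)}$ is a real number. I would do this by writing $I_{|\nu|}(w)=\tfrac{1}{2\pi}\int_{-\pi}^{\pi}e^{w\cos\psi}e^{\ii|\nu|\psi}\,d\psi$, setting $w=2s\cos\theta$, using $2\cos\theta\cos\psi=\cos(\theta+\psi)+\cos(\theta-\psi)$, expanding each of the two resulting exponentials with the generating function, and integrating out $\psi$: only the diagonal terms survive, and each is a product of two modified Bessel functions of nonnegative integer order evaluated at $s>0$, hence nonnegative. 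Everything else — the initial expansion of $g$, the tensorization over $\ell$, the interchange of the (absolutely convergent) series, and the complexity count — is routine.
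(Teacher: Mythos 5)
Your argument for the main lemma is correct and follows essentially the same route as the paper: start from the pairwise formula of \cref{lem:fourier-coeff-bessel-torus}, expand the one-dimensional factor $e^{-2s}I_{|\omega_\ell|}(2s\cos\pi(x-y))\,e^{-\ii\pi\omega_\ell(x+y)}$ into a series over an auxiliary integer $n$ via \cref{lem:fourier-bessel-cos}, split each term into a factor depending on $x$ and a factor depending on $y$ (the embeddings $\phi_{\ell,\pm}$), tensorize over $\ell$, and exchange sums so the double sum over anchors factors into a product of single sums. The only genuine divergence is in how you establish the auxiliary expansion: the paper proves \cref{lem:fourier-bessel-cos} by expanding the power series of $I_\omega$ and binomial-expanding $\cos^{2p+\omega}$, obtaining $q_{\omega,n}$ as a manifestly nonnegative double series, whereas you use the integral representation of $I_{|\nu|}$ together with the generating function $e^{s\cos\psi}=\sum_k I_{|k|}(s)e^{\ii k\psi}$, which yields the cleaner closed form $q_{\nu,n}=e^{-2s}I_{|(n-\nu)/2|}(s)\,I_{|(n+\nu)/2|}(s)$ for $n\equiv\nu \pmod 2$ (and $0$ otherwise). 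Both derivations give the nonnegativity needed to take $\sqrt{q_{\ell,n}}$ in $\phi_{\ell,\pm}$ (and your fallback of absorbing any sign into $\phi_+$ would also work); your closed form is arguably nicer for numerics. Your complexity accounting, with the caveat that the sum over $\mathbf{n}\in\ZZ^d$ must be truncated, matches the paper's own discussion following the lemma.
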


\Cref{lem:fourier-coeff-bessel-torus-linear} provides a formula for computing $\hat{g}_\omega$ which is linear in $m$, but which still requires numerical approximation to compute the sum on $n \in \ZZ^d$. For instance, restraining the sum to the hyperbolic cross \cite{dungHyperbolicCrossApproximation2017}
\begin{equation*}
  \mathrm{HC}(d, n) = \cb{\omega \in \ZZ^d; \prod_{\ell = 1}^d \max \cb{1, |\omega_\ell|} \leq n}
\end{equation*}
would result in a complexity of $O(n (\log d)^{n} m r)$ and should produce reasonably accurate estimate of $\hat{g}_\omega$ for low $n$.

Furthermore, since $q$ is real-even w.r.t $n$, the inner-product in \cref{eq:lem-four-bess-lin-fl-with-embed} can be simplified by computing only half of the terms.

\begin{proof}
  From \cref{lem:fourier-coeff-bessel-torus}, we have that
  \begin{equation}\label{eq:lem-four-bess-lin-main}
    \hat{g}_\omega = \sum_{i, j = 1}^m R_i^\top R_j \prod_{\ell = 1}^{d} e^{-2 \mathbf{s}_\ell} I_{|\omega_\ell|}(2 \mathbf{s}_\ell \cos \pi (\mathbf{z}_{i \ell} - \mathbf{z}_{j \ell})) e^{-\ii \pi \omega_\ell (\mathbf{z}_{i \ell} + \mathbf{z}_{j \ell})}.
  \end{equation}
  Introducing
  \begin{equation}\label{eq:lem-four-bess-lin-def-fl}
    f_\ell(x, y) = e^{-2 \mathbf{s}_\ell} I_{|\omega_\ell|}(2 \mathbf{s}_\ell \cos \pi (x - y)) e^{-\ii \pi \omega_\ell (x + y)},
  \end{equation}
  \cref{eq:lem-four-bess-lin-main} simplifies to
  \begin{equation}\label{eq:lem-four-bess-lin-main-fl}
    \hat{g}_\omega = \sum_{i, j = 1}^m R_i^\top R_j \prod_{\ell = 1}^{d} f_\ell(\mathbf{z}_{i \ell}, \mathbf{z}_{j \ell}).
  \end{equation}
  Using \cref{lem:fourier-bessel-cos}, for any $x, y \in \TT$,
  \begin{equation*}
    e^{-2 \mathbf{s}_\ell} I_{|\omega_\ell|}(2 \mathbf{s}_\ell \cos \pi (x - y)) = \sum_{n \in \ZZ} q_{\ell, n} e^{\pi \ii n (x - y)}
  \end{equation*}
  ($q_{\ell, n}$ depends on $\omega_\ell$) so that, $f_\ell$ defined in \cref{eq:lem-four-bess-lin-def-fl} now writes
  \begin{align}
    f_\ell(x, y)
     & = \sum_{n \in \ZZ} q_{\ell, n} e^{\pi \ii n (x-y)} e^{-\pi \ii \omega_\ell (x + y)}  \notag              \\
     & = \sum_{n \in \ZZ} q_{\ell, n} e^{\pi \ii (n - \omega_\ell) x} e^{- \pi \ii (n + \omega_\ell) y}  \notag \\
     & = \phi_{\ell, -}(x) \cdot \phi_{\ell, +}(y) \label{eq:lem-four-bess-lin-fl-with-embed}
  \end{align}
  where, for any $\ell \in \cb{1, \dots, d}$ and $z \in \TT$, we defined
  \begin{equation}\label{eq:lem-four-bess-lin-def-embed}
    \phi_{\ell, \pm}(z) = \p{\sqrt{q_{\ell, n}} e^{\pi \ii (n \pm \omega_\ell) z}}_{n \in \ZZ}.
  \end{equation}
  We then define the embedding $\phi_{\pm}: \TT \to (\ZZ^d \to \mathbb{C})$ be the tensor product of the $\phi_{\ell, \pm}$.
  Then, \cref{eq:lem-four-bess-lin-fl-with-embed}, enables to write $\hat{g}_\omega$ in \cref{eq:lem-four-bess-lin-main-fl} as
  \begin{align*}
    \hat{g}_\omega
     & = \sum_{i, j = 1}^m \sum_{k = 1}^r R_{ki} R_{kj} \phi_- (\mathbf{z}_{i}) \cdot \phi_+ (\mathbf{z}_{j})                                                                                                                                                 \\
     & = \sum_{k = 1}^r \csb{\sum_{i = 1}^m R_{ki} \phi_- (\mathbf{z}_{i})} \cdot \csb{\sum_{i = 1}^m R_{ki} \phi_+ (\mathbf{z}_{i})}                                                                                                                         \\
     & = \sum_{k = 1}^r \csb{\sum_{i = 1}^m R_{ki} \phi_{1, -} (\mathbf{z}_{i 1}) \otimes \dots \otimes \phi_{d, -} (\mathbf{z}_{i d})} \cdot \csb{\sum_{i = 1}^m R_{ki} \phi_{1, +} (\mathbf{z}_{i 1}) \otimes \dots \otimes \phi_{d, +} (\mathbf{z}_{i d})} \\
     & = \sum_{k = 1}^r \sum_{n \in \ZZ^d} \p{\sum_{i = 1}^m R_{ki} \prod_{\ell = 1}^d \phi_{\ell, -} (\mathbf{z}_{i \ell})_{n_\ell}} \cdot \p{\sum_{i = 1}^m R_{ki} \prod_{\ell = 1}^d \phi_{\ell, +} (\mathbf{z}_{i \ell})}
  \end{align*}
  which is the desired result.
\end{proof}

\section{Other computation}

\begin{lemma}\label{lem:cheby-coeff-expfunc}
  Let $f$ be the function defined on $(-1, 1)$ with
  \begin{equation}
    \forall u \in (0,1/2), ~~ f(\cos 2\pi u) = e^{s \cos 2\pi (u - v)}.
  \end{equation}
  Then, its Chebychev coefficient are given with
  \begin{equation}
    f_\omega = (1 + \one_{\omega \neq 0}) \cos(2\pi \omega v) I_\omega(s).
  \end{equation}
\end{lemma}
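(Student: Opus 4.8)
The plan is to compute the Fourier series in $u$ of the periodic function $u\mapsto e^{s\cos2\pi(u-v)}$ and read off its cosine part, which is exactly the Chebychev coefficient of $f$ in the convention of \cref{eq:cheby-coeff-psd-model-def}; the only ingredient is the generating function of the modified Bessel functions, which already underlies \cref{eq:def-bessel-kernel}. Concretely, \cref{eq:def-bessel-kernel} gives $e^{s\cos2\pi x}=e^{s}q_s(x)=\sum_{\omega\in\ZZ}I_{|\omega|}(s)e^{2\pi\ii\omega x}$, which in real form reads $e^{s\cos\psi}=I_0(s)+2\sum_{n\geq1}I_n(s)\cos(n\psi)$. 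Taking $\psi=2\pi(u-v)$,
\[
  f(\cos2\pi u)=e^{s\cos2\pi(u-v)}=I_0(s)+2\sum_{n\geq1}I_n(s)\cos\big(2\pi n(u-v)\big).
\]

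Next I would expand $\cos\big(2\pi n(u-v)\big)=\cos(2\pi nu)\cos(2\pi nv)+\sin(2\pi nu)\sin(2\pi nv)$ and integrate against $\cos(2\pi\omega u)$ over a full period of $u$, as in \cref{eq:cheby-coeff-psd-model-def}. Orthogonality of $\{\cos(2\pi nu),\sin(2\pi nu)\}_{n}$ kills every term except $n=\omega$, and within that term the $\sin(2\pi\omega u)\sin(2\pi\omega v)$ piece is orthogonal to $\cos(2\pi\omega u)$ and drops out. Hence for $\omega\geq1$ one is left with $2I_\omega(s)\cos(2\pi\omega v)\int_0^1\cos^2(2\pi\omega u)\,du=I_\omega(s)\cos(2\pi\omega v)$, and for $\omega=0$ with $I_0(s)=\cos(0)\,I_0(s)$; multiplying by the normalisation $(1+\one_{\omega\neq0})$ yields exactly $f_\omega=(1+\one_{\omega\neq0})\cos(2\pi\omega v)\,I_\omega(s)$.

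The one point worth stating carefully — and the only place bookkeeping matters — is which ``Chebychev coefficient'' is meant: the individual exponential $e^{s\cos2\pi(u-v)}$ is not symmetric under $u\mapsto1-u$, so one must use the full-period functional $(1+\one_{\omega\neq0})\int_0^1(\cdot)\cos(2\pi\omega u)\,du$ of \cref{eq:cheby-coeff-psd-model-def} rather than restricting to $u\in(0,\nicefrac12)$. This is precisely the form in which the lemma is invoked in the proof of \cref{lem:cheby-coeff-bessel}, where moreover the four exponentials occur in conjugate pairs $u+m\leftrightarrow u-m$ whose asymmetries cancel, so that the recombined coefficient of $K_sK_s$ is the genuine Chebychev coefficient. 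With this understood the computation above is immediate — it is just the classical integral representation $I_n(s)=\tfrac1\pi\int_0^\pi e^{s\cos\theta}\cos(n\theta)\,d\theta$ dressed with the angle-addition formula — and I do not foresee any real obstacle beyond keeping track of the $(1+\one_{\omega\neq0})$ factor.
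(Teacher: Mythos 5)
Your proposal is correct and follows essentially the same route as the paper's proof: both reduce to the full-period cosine transform $2\int_{I_1} e^{s\cos 2\pi(u-v)}\cos(2\pi\omega u)\,du$, apply the angle-addition formula so that the sine contribution vanishes, and identify what remains with $I_\omega(s)$ (you via the generating-function expansion and orthogonality, the paper via the integral representation of the Bessel function -- the same computation read in opposite directions). Your remark about needing the full-period functional rather than the restriction to $u\in(0,\nicefrac12)$ is a correct and worthwhile clarification of a point the paper leaves implicit.
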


\begin{proof}
  The $\omega \in \NN_*$. The component $\omega$ of a function $f$ on the Chebychev basis is given with
  \begin{equation*}
    f_\omega = \frac{2}{\pi} \int_{-1}^1 f(x) T_\omega(x) \frac{\dd x}{\sqrt{1 - x^2}},
  \end{equation*}
  which we conveniently rewrite, with the classical change of variable $x = \cos 2\pi u$,
  \begin{equation}\label{eq:cheby-dotprod}
    f_\omega = 2 \int_{I_1} f(\cos 2\pi u) \cos(2\pi \omega u) \dd u
  \end{equation}
  which is valid for any interval $I_1 \subset \RR$ of length $1$.

  Now, for $s > 0$, consider the function $f$ defined on $(-1, 1)$ with $x \mapsto e^{s \cos(\arccos(x) - 2\pi v)}$, or equivalently
  \begin{equation}\label{eq:cheby-dotprod-def-f}
    \forall u \in (0,1/2), ~~ f(\cos 2\pi u) = e^{s \cos 2\pi (u - v)}.
  \end{equation}
  Putting \cref{eq:cheby-dotprod-def-f} into \cref{eq:cheby-dotprod}, we obtain
  \begin{align*}
    f_\omega
     & = 2 \int_{I_1} e^{s \cos 2\pi (u - v)} \cos (2\pi \omega u) \dd u                                                                                                  \\
     & = 2 \int_{I_1} e^{s \cos 2\pi u} \cos (2\pi \omega (u + v)) \dd u                                                                                                  \\
     & = 2 \int_{I_1} e^{s \cos 2\pi u} \cos (2\pi \omega u) \cos (2\pi \omega v) \dd u - 2 \int_{I_1} e^{s \cos 2\pi u} \sin (2\pi \omega u) \sin (2\pi \omega v) \dd u.
  \end{align*}
  The last term is odd, hence integrate to $0$ on an interval centered around $0$. Hence,
  \begin{equation}\label{eq:cheby-dotprod-fw1}
    f_\omega = 2 \cos (2\pi \omega v) \int_{I_1} e^{s \cos 2\pi u} \cos (2\pi \omega u) \dd u.
  \end{equation}
  We recognize the definition of the modified Bessel function of the first kind, defined in \cref{eq:def-bessel-kernel}. Plugging this into \cref{eq:cheby-dotprod-fw1}, we obtain
  \begin{equation}
    f_\omega = 2 \cos(2\pi \omega v) I_\omega(s) = 2I_\omega(s) H_\omega(\cos(2\pi v)).
  \end{equation}

  If $\omega = 0$, we add a factor $1/2$ into the definition in \cref{eq:cheby-dotprod}, which yields
  \begin{equation}
    f_\omega = I_0(s).
  \end{equation}
\end{proof}

\begin{lemma}[Fourier decomposition of Bessel composed with cosine]\label{lem:fourier-bessel-cos}
  Let $s > 0$, $\omega \in \NN$ and $z \in \TT$. Then,
  \begin{equation}\label{eq:lem-fourier-bessel-cos-result}
    \begin{aligned}
      e^{-2s} I_{\omega}(2s \cos 2\pi z)              & = \sum_{n \in \ZZ} q_{\omega, n} e^{2\pi \ii n z}, \\
      \text{where} ~~ \forall n \geq 0, q_{\omega, n} & = \begin{cases}
        e^{-2s} \sum_{p \geq (\frac{n - \omega}{2})_+} \frac{(s/2)^{2p + \omega}}{p! (p + \omega)!} \binom{2p + \omega}{p - \frac{n - \omega}{2}} & \text{if} ~~ n \equiv \omega, \\
        0                                                                                                                                         & \text{otherwise.}
      \end{cases}
    \end{aligned}
  \end{equation}
  and $q_{\omega, -n} = q_{\omega, n}$ by evenness of the coefficients.
\end{lemma}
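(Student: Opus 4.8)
The plan is to expand everything into absolutely convergent power series and read off the Fourier coefficients by matching the exponentials $e^{2\pi\ii n z}$. First I would recall the power series of the modified Bessel function of the first kind, $I_\omega(x) = \sum_{p\ge 0} \frac{(x/2)^{2p+\omega}}{p!\,(p+\omega)!}$, which is entire in $x$. Substituting $x = 2s\cos 2\pi z$ gives
\begin{equation*}
  e^{-2s} I_\omega(2s\cos 2\pi z) = e^{-2s}\sum_{p\ge 0} \frac{s^{2p+\omega}}{p!\,(p+\omega)!}\,(\cos 2\pi z)^{2p+\omega}.
\end{equation*}
Then I would expand each power of the cosine with the binomial theorem: writing $\cos 2\pi z = \frac12\left(e^{2\pi\ii z}+e^{-2\pi\ii z}\right)$, for $m = 2p+\omega$ one has $(\cos 2\pi z)^m = 2^{-m}\sum_{k=0}^m \binom{m}{k} e^{2\pi\ii(2k-m)z}$.

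Next I would substitute this into the series and collect the coefficient of $e^{2\pi\ii n z}$. Since $|\cos 2\pi z|\le 1$, the double series over $(p,k)$ is dominated in absolute value by $\sum_p \frac{s^{2p+\omega}}{p!\,(p+\omega)!} = I_\omega(2s) < \infty$, so it converges absolutely and may be summed in any order. In the term indexed by $(p,k)$ the exponent $2k-m$ equals $n$ precisely when $k = p + \frac{n+\omega}{2}$; this is an integer lying in $\{0,\dots,m\}$ if and only if $n\equiv\omega \pmod 2$ and $p \ge \left(\frac{n-\omega}{2}\right)_+$ when $n \ge 0$. Using the symmetry $\binom{m}{k} = \binom{m}{m-k}$ to rewrite $\binom{2p+\omega}{p+(n+\omega)/2} = \binom{2p+\omega}{p-(n-\omega)/2}$, and absorbing $2^{-m}$ into $(s/2)^{2p+\omega}$, I would obtain exactly the claimed expression for $q_{\omega,n}$.

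Finally, the evenness $q_{\omega,-n} = q_{\omega,n}$ is immediate: replacing $n$ by $-n$ amounts to the substitution $k \mapsto m-k$ in each binomial sum, which leaves every $\binom{m}{k}$ invariant (equivalently, $z \mapsto -z$ fixes $\cos 2\pi z$, so the left-hand side is even in $z$). No step here is a genuine obstacle; the only care needed is the justification of interchanging the sums over $p$ and $k$ — handled by the absolute convergence bound above — together with the bookkeeping of the index $k = p + \frac{n+\omega}{2}$ and the resulting parity and range conditions on $p$, which is the most error-prone part of the computation.
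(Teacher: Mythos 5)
Your proof is correct and follows essentially the same route as the paper's: expand $I_\omega$ in its power series, write $(\cos 2\pi z)^{2p+\omega}$ via the binomial theorem, and re-index to collect the coefficient of $e^{2\pi\ii n z}$, with the parity and range conditions on $p$ falling out of the index constraint. Your explicit justification of the interchange of sums via absolute convergence is a small addition the paper leaves implicit, but the argument is otherwise identical.
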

\begin{proof}
  From the definition of the modified Bessel function of the first kind \cite[p.77, Eq. 2]{watsonTreatiseTheoryBessel1922}, we have
  \begin{equation*}
    I_\omega(z) = \sum_{p \geq 0} \frac{(z/2)^{2p + \omega}}{p! (p + \omega)!},
  \end{equation*}
  so that
  \begin{align}
    I_{\omega}(2s \cos 2\pi z)
     & = \sum_{p \geq 0} \frac{s^{2p + \omega}}{p! (p + \omega)!} \cos(2\pi z)^{2p + \omega} \notag                                                                                            \\
     & = \sum_{p \geq 0} \frac{(s/2)^{2p + \omega}}{p! (p + \omega)!} \p{e^{2\pi \ii z} + e^{-2\pi \ii z}}^{2p + \omega} \notag                                                                \\
     & = \sum_{p \geq 0} \frac{(s/2)^{2p + \omega}}{p! (p + \omega)!} \sum_{k = 0}^{2p + \omega} \binom{2p+\omega}{k} e^{2\pi \ii (2(p - k) + \omega) z}  \label{eq:lem-fourier-bessel-cos-1}.
  \end{align}
  Using the change of variable $n = 2(p-k) + \omega$ into \cref{eq:lem-fourier-bessel-cos-1}, we see that $n$ has the same parity as $\omega$ and
  \begin{equation}\label{eq:lem-fourier-bessel-cos-2}
    I_{\omega}(2s \cos 2\pi z)
    = \sum_{p \geq 0} \frac{(s/2)^{2p + \omega}}{p! (p + \omega)!} \sum_{\substack{n = - (2p - \omega) \\ n \equiv \omega}}^{2p + \omega} \binom{2p+\omega}{p - \frac{n - \omega}{2}} e^{2\pi \ii n z}.
  \end{equation}
  \Cref{eq:lem-fourier-bessel-cos-2} can be rewritten
  \begin{equation*}
    I_{\omega}(2s \cos 2\pi z)
    = \sum_{\substack{n \in \ZZ \\ n \equiv \omega}} e^{2\pi \ii n z} \sum_{p \geq 0} \frac{(s/2)^{2p + \omega}}{p! (p + \omega)!} \binom{2p + \omega}{p - \frac{n - \omega}{2}} \one_{-(2p + \omega) \leq n \leq 2p+\omega},
  \end{equation*}
  for which \cref{eq:lem-fourier-bessel-cos-result} is a concise rewriting.
\end{proof}

\end{document}